\newcommand{\be}{\begin{equation}}
\newcommand{\ee}{\end{equation}}
\newcommand{\bea}{\begin{eqnarray}}
\newcommand{\eea}{\end{eqnarray}}
\newcommand{\ba}{\begin{array}}
\newcommand{\ea}{\end{array}}
\newcommand{\bc}{\begin{center}}
\newcommand{\ec}{\end{center}}
\newcommand{\ben}{\begin{enumerate}}
\newcommand{\een}{\end{enumerate}}
\newcommand{\bfi}{\begin{figure}}
\newcommand{\efi}{\end{figure}}
\newcommand{\bq}{\begin{quote}}
\newcommand{\eq}{\end{quote}}
\newcommand{\bqu}{\begin{quotation}}
\newcommand{\equ}{\end{quotation}}
\newenvironment{emphit}{\begin{itemize}}{\end{itemize}}
\newcommand{\bemp}{\begin{emphit}}
\newcommand{\eemp}{\end{emphit}}
\newcommand{\bt}{\begin{tabular}}
\newcommand{\et}{\end{tabular}}
\newtheorem{myth}{Theorem}[section]
\newtheorem{myexam}{Example}[section]
\newtheorem{mylem}{Lemma}[section]
\newtheorem{mydef}{Definition}[section]
\newtheorem{myrem}{Remark}[section]
\begin{document}
\date{}
\title{Parastrophes and Cosets of Soft Quasigroups
\footnote{2020 Mathematics Subject Classification. Primary 20N05, 03E72;
Secondary 03E75}
\thanks{{\bf Keywords and Phrases :} soft set, quasigroup, soft quasigroup, soft loop, left(right) coset, quotient of soft quasigroup, parastrophes}}
\author{Anthony Oyem\thanks{All correspondence to be addressed to this author.} \\Department of Mathematics,\\University of Lagos, \\Akoka 100213 Nigeria.\\ tonyoyem@yahoo.com\and T\`em\'it\d {\'o}p\d {\'e} Gb\d {\'o}l\'ah\`an  Jaiy\'e\d ol\'a\\
Department of Mathematics,\\ Obafemi Awolowo University, \\
Ile-Ife 220005, Nigeria.\\ tjayeola@oauife.edu.ng} 
\maketitle

\begin{abstract}
This paper introduced the concept of soft quasigroup, its parastrophes, soft nuclei, left (right) coset, distributive soft quasigroups and normal soft quasigroups. Necessary and sufficient conditions  for a soft set over a quasigroup (loop) to be a soft quasigroup (loop) were established. It was proved that a soft set over a group is a soft group if and only if it is a soft loop or either of two of its parastrophes is a soft groupoid. For a finite quasigroup, it was shown that the orders (arithmetic and geometric means) of the soft quasigroup over it and its parastrophes are equal.
It was also proved that if a soft quasigroup is distributive, then all its parastrophes are distributive, idempotent and flexible soft quasigroups.
For a distributive soft quasigroup, it was shown that its left and right cosets form families of distributive soft quasigroups that are isomorphic.
If in addition, a soft quasigroup is normal, then its left and right cosets forms families of normal soft quasigroups. On another hand, it was found that if a soft quasigroup is a normal and distributive soft quasigroup, then its left (right) quotient is a family of commutative distributive quasigroups which have a 1-1 correspondence with the left (right) coset of the soft quasigroup.
\end{abstract}

\section{Introduction}
\paragraph{}
A quasigroup is an algebraic structure which is not necessarily associative in the sense of a group.

However, there exist some interesting properties of quasigroups that make them different from group. A quasigroup may be homogeneous in nature in the sense that its group of automorphisms may be transitive. The only group with homogeneous property has just one element. Also a quasigroup has a rich outer symmetry (or duality) because any quasigroup structure is associated with six parastrophes. In general, only the transpose of a group is a group.

Effectiveness of the application of the theory of quasigroups is based on the fact that quasigroups are ``generalized permutations''. Namely, both left and right translations in quasigroups are permutations and quasigroups are characterized by this property.

The study of soft sets theory started with Molodtsov \cite{11} as a better generalization of set theory for the inquest and formal modeling of mathematical problems represented by uncertainties and vagueness due to partial and inadequate informations.

As a remedy for this defect in set theory, several advancements of set theory have been proposed and developed like the vague set theory by Gau and Buehrer \cite{51}, fuzzy set theory by Zadeh \cite{52}, rough set theory by Pawlak \cite{53}, neutrosophic set by Smarandache \cite{54}. However, all these theories have their challenges, possibly mainly due to inadequacy of the parameterization tools of the theories as pointed out in Molodtsov \cite{11}. For instance, probability theory can only deal with stochastically stable systems, where a limit of the sample mean should exit in a long series of trials.
The method of interval mathematics is not adequate for problems with different type of uncertainties, while rough set theory approach can only handle problems that involves uncertainties caused by indiscernible elements with different values in decision attributes. The fuzzy set theory approach is found most appropriate for dealing with uncertainties. It provides a tool on how to set a membership function, since the type of the membership function is defined on individual attributes.

Compared to the above mentioned mathematical tools for computing uncertainties, soft set has one important property that makes it different from other tools; parametrization. For example, it is not necessarily like membership grade in fuzzy set or approximation grade in rough set. Soft set theory has a rich potential for applications in several directions, few of which were shown by Molodtsov \cite{11} in his pioneer work. Atkas and Cagman \cite{3} did a comparison of soft sets with fuzzy sets and rough sets and showed that the both can be considered as a soft set.

Recently Oyem et al. \cite{oyem, tony} extended the results of soft sets to quasigroup by investigating the order of finite soft quasigroups and the algebraic properties of soft quasigroups.

\paragraph{}
Inspired by the study of algebraic properties of soft sets, our aim in this paper is to initiate research about soft quasigroup, its parastrophes and their cosets. It was shown that every soft quasigroup is associated to five soft quasigroups called its parastrophes. Necessary and sufficient conditions  for a soft set over a quasigroup (loop) to be a soft quasigroup (loop) were established. It was proved that a soft set over a group is a soft group if and only if it is a soft loop or either of two of its parastrophes is a soft groupoid. For a finite quasigroup $Q$, it was shown that the orders (arithmetic and geometric means) of the soft quasigroup $(F,A)_Q$ and it parastrophes are equal.

It was established that if a soft quasigroup is distributive, then all its parastrophes are distributive, idempotent and flexible soft quasigroups. For a distributive soft quasigroup $(F,A)$, it was shown that its left and right cosets form families of distributive soft quasigroups that are isomorphic. If in addition, $(F,A)$ is normal, then its left and right cosets form families of normal soft quasigroups. On another hand, it was found that if $(F,A)$ is a normal and distributive soft quasigroup, then its quotient is a family of commutative distributive quasigroups. 

\section{Preliminaries}
\paragraph{}
In this section, we review some notions and results concerning quasigroups and soft sets.

\subsection{Groupoid and Quasigroups}
\begin{mydef}(Groupoid, Quasigroup)\cite{5,6,14,2.3}
	
Let $G$ be a non-empty set. Define a binary operation ($\cdot $) on
$G$. If $x\cdot y\in G$ for all $x, y\in G$, then the pair $(G, \cdot )$ is called a \textit{groupoid} or \textit{magma}.
If each of the equations:
\begin{displaymath}
a\cdot x = b\qquad\textrm{and}\qquad y\cdot a=b
\end{displaymath}
has unique solutions in $G$ for $x$ and $y$ respectively for all $a, b \in G$, then $(G,
\cdot )$ is called a \textit{quasigroup}.
If there exists a unique element $e\in G$ called the
\textit{identity element} such that for all $x\in G$, $x\cdot
e = e\cdot x = x$, $(G, \cdot )$ is called a \textit{loop}.
\end{mydef} 
We write
$xy$ instead of $x\cdot y$, and stipulate that $\cdot$ has lower
priority than juxtaposition among factors to be multiplied. For
instance, $x\cdot yz$ stands for $x(yz)$.
Let $x$ be a fixed element in a groupoid $(G, \cdot )$. The left and right translation maps of $G$, $L_x$ and $R_x$ respectively are
defined by
\begin{displaymath}
yL_x=x\cdot y\qquad\textrm{and}\qquad yR_x=y\cdot x.
\end{displaymath}
It can now be said that a groupoid $(G, \cdot )$ is a quasigroup if
its left and right translation mappings are permutations. Since the left and right translation mappings of a
quasigroup are bijective, then the inverse mappings $L_x^{-1}$ and
$R_x^{-1}$ exist. Let
\begin{displaymath}
x\backslash y =yL_x^{-1}\qquad\textrm{and}\qquad
x/y=xR_y^{-1}
\end{displaymath}
and note that
\begin{displaymath}
x\backslash y = z\Leftrightarrow x\cdot
z=y\qquad\textrm{and}\qquad x/y = z\Leftrightarrow z\cdot y=x.
\end{displaymath}
In the language of universal algebra, a quasigroup $(G, \cdot )$ can also be represented as a quasigroup $(G, \cdot,/,\backslash )$.

For more on quasigroups, readers can check \cite{2.2,  2.5, 2.6, 2.4, 2.1, 8, 9, 1, 2,1.3,1.2}.
\begin{mydef}(Subgroupoid, Subquasigroup)\cite{5,6,14,2.3}

Let $(Q,\cdot )$ be a groupoid quasigroup and $\emptyset \ne H \subseteq Q$. Then, $H$ is called a subgroupoid (subquasigroup) of $Q$ if $(H,\cdot )$ is a groupoid (quasigroup). This is often expressed as $H \le Q$.
\end{mydef}

\begin{mydef}(Normal Subquasigroup)\cite{6,14}

Let $(Q,\cdot )$ be a quasigroup. An equivalence relation $\theta$ on a quasigroup $(Q,\cdot )$ is called a normal equivalence (or normal congruence) relation  if it satisfies the following conditions for all $a,b,c,d\in Q$:
\begin{enumerate}
\item if $ca\theta cb\Rightarrow a\theta b$;
\item if $ac\theta bc\Rightarrow a\theta b$;
\item if $a\theta b$ and $c\theta d\Rightarrow~ac\theta bd$
\end{enumerate}
Let $\emptyset \ne H \le Q$. Then, $H$ is called a normal subquasigroup of $Q$, written as $H\lhd  Q$ if $H$ is an equivalence class with respect to some normal equivalence  relation $\theta$ on $(Q,\cdot )$.
\end{mydef}

\begin{mydef}(Parastrophes of a Quasigroup \cite{14, 1, 1.1})

Let $(Q, \star)$ be a quasigroup. The five parastrophes $Q_i = (Q, \star_i),~i=1,2,3,4,5$ of $(Q, \star)$ are the quasigroups $(Q,\circledast),(Q,/),(Q,\backslash),(Q,//),(Q,\backslash\backslash)$ whose binary operations over $Q$ are defined in Table~\ref{2.1}.
\end{mydef}
\begin{table}[!hpb]
\begin{tabular}{|c||c|c|}
\hline
$\cdot $ & Parastrophic operation & Name \\
\hline
1  &$x\backslash y = z \Longleftrightarrow x \star z = y$  & left division \\
2 & $x/y = z \Longleftrightarrow z \star y = x$ & right division   \\
3 & $x \star y = z \Longleftrightarrow y \circledast x = z$ &opposite multiplication \\
4 & $x//y = z \Longleftrightarrow y/x = z \Longleftrightarrow z\star x = y$ & opposite right division\\
5 & $x \backslash \backslash y = z \Longleftrightarrow y \backslash x = z \Longleftrightarrow y \star z = x$ & opposite left division  \\
\hline
\end{tabular}\caption{Parastrophic operations on a quasigroup $(Q,\star)$}\label{2.1}
\end{table}

Every quasigroup belongs to a set of six quasigroups, called adjugates by Fisher and  Yates \cite{2.1},  conjugates by Stein \cite{2.3},  parastrophes by  Sade \cite{2.2}.
Most other authors like Shchukin \cite{2.5} and, Shchukin and Gushan  \cite{2.6} and Artzy \cite{2.4} adopted the last terminology. Ogunriade et al. \cite{Ogun1}  studied a class of distributive quasigroup and their parastrophes as well as self-distributive quasigroups with key laws in Ogunriade et al. \cite{Ogun2}.

\begin{myexam}\label{Exam0}
Consider a quasigroup $(Q, \cdot)$. Its multiplication table and the multiplication tables of its five parastrophes are as displayed in Table~\ref{2.2}.
\end{myexam}
\begin{table}
\begin{tabular}{ |c | c | c | c | c | c | c |}
\hline
$\cdot$ & $ 1 $ & $ 2 $ & $ 3 $ & $ 4 $ & $ 5 $ & $ 6 $\\
\hline
1 & 1 & 2 & 3 & 4 & 6 & 5\\
2 & 2 & 1 & 5 & 6 & 3 & 4\\
3 & 3 & 5 & 4 & 1 & 2 & 6\\
4 & 4 & 6 & 1 & 3 & 5 & 2\\
5 & 5 & 4 &
 6 & 2 & 1 & 3\\
6 & 6 & 3 & 2 & 5 & 4 & 1\\
\hline
\end{tabular}
\vspace{0.5cm} \qquad 
\begin{tabular}{ |c | c | c | c | c | c | c |}
\hline
$\star$ & $ 1 $ & $ 2 $ & $ 3 $ & $ 4 $ & $ 5 $ & $ 6 $\\
\hline
1 & 1 & 2 & 3 & 4 & 5 & 6\\
2 & 2 & 1 & 5 & 6 & 4 & 3\\
3 & 3 & 5 & 4 & 1 & 6 & 2\\
4 & 4 & 6 & 1 & 3 & 2 & 5\\
5 & 6 & 3 & 2 & 5 & 1 & 4\\
6 & 5 & 4 & 6 & 2 & 3 & 1\\
\hline
\end{tabular}
\vspace{0.5cm} \qquad 
\begin{tabular}{ |c | c | c | c | c | c | c |}
\hline
$/$ & $ 1 $ & $ 2 $ & $ 3 $ & $ 4 $ & $ 5 $ & $ 6 $\\
\hline
1 & 1 & 2 & 4 & 3 & 5 & 6\\
2 & 2 & 1 & 6 & 5 & 3 & 4\\
3 & 3 & 6 & 1 & 4 & 2 & 5\\
4 & 4 & 5 & 3 & 1 & 6 & 2\\
5 & 5 & 3 & 2 & 6 & 4 & 1\\
6 & 6 & 4 & 5 & 2 & 1 & 3\\
\hline
\end{tabular}
\begin{tabular}{ |c | c | c | c | c | c | c |}
\hline
$\backslash$ & $ 1 $ & $ 2 $ & $ 3 $ & $ 4 $ & $ 5 $ & $ 6 $\\
\hline
1 & 1 & 2 & 3 & 4 & 6 & 5\\
2 & 2 & 1 & 5 & 6 & 3 & 4\\
3 & 4 & 5 & 1 & 3 & 2 & 6\\
4 & 3 & 6 & 4 & 1 & 5 & 2\\
5 & 5 & 4 & 6 & 2 & 1 & 3\\
6 & 6 & 3 & 2 & 5 & 4 & 1\\
\hline
\end{tabular}\qquad \qquad
\begin{tabular}{ |c | c | c | c | c | c | c |}
\hline
$//$ & $ 1 $ & $ 2 $ & $ 3 $ & $ 4 $ & $ 5 $ & $ 6 $\\
\hline
1 & 1 & 2 & 3 & 4 & 5 & 6\\
2 & 2 & 1 & 5 & 6 & 4 & 3\\
3 & 3 & 5 & 1 & 4 & 6 & 2\\
4 & 4 & 6 & 3 & 1 & 2 & 5\\
5 & 6 & 3 & 2 & 5 & 1 & 4\\
6 & 5 & 4 & 6 & 2 & 3 & 1\\
\hline
\end{tabular}\qquad 
\begin{tabular}{ |c | c | c | c | c | c | c |}
\hline
$\backslash \backslash$ & $ 1 $ & $ 2 $ & $ 3 $ & $ 4 $ & $ 5 $ & $ 6 $\\
\hline
1 & 1 & 2 & 3 & 4 & 5 & 6\\
2 & 2 & 1 & 6 & 5 & 3 & 4\\
3 & 4 & 6 & 1 & 3 & 2 & 5\\
4 & 3 & 5 & 4 & 1 & 6 & 2\\
5 & 5 & 3 & 2 & 6 & 4 & 1\\
6 & 6 & 4 & 5 & 2 & 1 & 3\\
\hline
\end{tabular}
\caption{Parastrophes $(Q,\star),(Q,/),(Q,\backslash),(Q,//),(Q,\backslash\backslash)$ of $(Q,\cdot)$}\label{2.2}
\end{table}


Parastrophes in general do not preserve the structure of a quasigroup but they do preserve all subquasigroups; all individual generators as generators and the number of generators is the same for each of the quasigroup's parastrophes.
Belousov \cite{1.5}, Dudek \cite{1.1}, Duplak \cite{1.6}, Jaiy\'e\d ol\'a \cite{1.3}, Samardziska \cite{1.2} have studied the parastrophy of quasigroups and loops in different fashions.\\

Aside the opposite parastrophe of a group, none of the other parastrophes is a group.  Jaiy\'e\d ol\'a \cite{1.3} investigated the parastrophism of associativity in quasigroups. However, the parastrophes of idempotent quasigroup is also idempotent. In some instances, the parastrophes of a given quasigroup are pairwise equal or pairwise distinct.

\begin{myth}\label{Thm1} \cite{14, 1.3}\\
Let $(Q, \star)$ be a quasigroup and $(Q, \star_i)$ be its parastrophes, $i=1,2,3,4,5$. If  $(H, \star)$ is a subquasigroup of $(Q,\star)$, then  $(H, \star_i)$ is a subquasigroup of $(Q, \star_i)$ for $i=1,2,3,4,5$.
\end{myth}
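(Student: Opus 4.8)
The plan is to reduce everything to one observation: by Table~\ref{2.1}, each parastrophe operation $\star_i$ is, up to a possible interchange of its two arguments, one of the three operations $\star$, $\backslash$, $/$ of $(Q,\star)$, so that closure and solvability for $\star_i$ inside $H$ follow from the corresponding facts for $\star,\backslash,/$ inside $H$.

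First I would record a closure lemma: if $(H,\star)$ is a subquasigroup of $(Q,\star)$, then for all $a,b\in H$ the elements $a\backslash b$ and $a/b$ computed in $(Q,\star)$ already lie in $H$, and they coincide with the divisions computed inside the quasigroup $(H,\star)$. This is immediate from uniqueness of solutions in $(Q,\star)$: since $(H,\star)$ is itself a quasigroup (Definition~2.2), there is $z\in H$ with $a\star z=b$, and by uniqueness in $Q$ this $z$ must equal $a\backslash b$; the argument for $a/b$ is symmetric. Hence $H$ is a subalgebra of $(Q,\star,\backslash,/)$.

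Next, using Table~\ref{2.1}, I would write each $\star_i$ restricted to $H\times H$ in terms of $\star,\backslash,/$: for $a,b\in H$ one has $a\star_1 b=a\backslash b$, $\ a\star_2 b=a/b$, $\ a\star_3 b=b\star a$, $\ a\star_4 b=b/a$, $\ a\star_5 b=b\backslash a$. By the closure lemma each of these belongs to $H$, so $H$ is closed under every $\star_i$, and these same formulas exhibit $\star_i|_{H\times H}$ as the $i$-th parastrophe operation of the quasigroup $(H,\star)$. It then remains to see that $(H,\star_i)$ is a quasigroup, i.e.\ that $a\star_i x=b$ and $y\star_i a=b$ have unique solutions in $H$; via the displayed formulas each such equation becomes a $\star$-, $\backslash$- or $/$-equation over $(H,\star)$ (e.g.\ $a\star_1 x=b\iff x=a\star b$ and $y\star_1 a=b\iff y=a/b$), and each of those has a unique solution in $H$ because $(H,\star)$ is a quasigroup. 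Since $H\subseteq Q$ and $\star_i|_{H\times H}$ is just the restriction of the operation of $(Q,\star_i)$, this yields $(H,\star_i)\le(Q,\star_i)$ for every $i=1,2,3,4,5$.

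The only genuine subtlety — and the step I would state most carefully — is the closure lemma, namely that division performed inside the subquasigroup agrees with division performed in the ambient quasigroup; once that is in place, everything else is a mechanical translation through Table~\ref{2.1}. (In the finite case this point evaporates, since a sub-magma of a finite quasigroup is automatically a subquasigroup, but phrasing the argument via uniqueness of solutions keeps it valid in general.)
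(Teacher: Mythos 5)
Your proof is correct. Note, however, that the paper does not actually prove Theorem~\ref{Thm1}: it is stated as a known result imported from Pflugfelder \cite{14} and Jaiy\'e\d ol\'a \cite{1.3}, so there is no in-paper argument to compare yours against. What you supply is a complete, self-contained demonstration, and you correctly isolate the one nontrivial step: the closure lemma asserting that for $a,b\in H$ the divisions $a\backslash b$ and $a/b$ computed in $(Q,\star)$ already lie in $H$ and agree with the divisions of the quasigroup $(H,\star)$ --- this follows, as you say, from uniqueness of solutions in $Q$, and it is exactly the point that fails for a mere sub-magma of an infinite quasigroup. Once $H$ is known to be a subalgebra of $(Q,\star,\backslash,/)$, reading each $\star_i$ off Table~\ref{2.1} as $\star$, $\backslash$ or $/$ with possibly transposed arguments gives closure of $H$ under every $\star_i$, and unique solvability of $a\star_i x=b$ and $y\star_i a=b$ inside $H$ reduces to unique solvability in the quasigroup $(H,\star)$. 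One cosmetic remark: your numbering of the $\star_i$ follows Table~\ref{2.1} rather than the order in which the paper's Definition lists the parastrophes $(Q,\circledast),(Q,/),(Q,\backslash),(Q,//),(Q,\backslash\backslash)$; this is immaterial since all five operations are covered, but you may wish to fix one convention explicitly.
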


\begin{myth}(Pflugfelder \cite{14})\label{Thm2}

Let $(Q, \cdot)$ be a quasigroup (loop) and $\emptyset \ne H  \subseteq Q$. $(H, \cdot)\le (Q, \cdot)$ if and only $(H, \cdot),(H, /),(H, \backslash)$ are groupoids.
\end{myth}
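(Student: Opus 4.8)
The plan is to prove the two implications separately, with the common engine being the transfer of existence and uniqueness of solutions between $H$ and the ambient quasigroup $Q$. Recall that $(H,\cdot)\le(Q,\cdot)$ means precisely that $(H,\cdot)$ is itself a quasigroup, and that $(H,/)$, $(H,\backslash)$ being groupoids means exactly that $H$ is closed under the operations $/$ and $\backslash$ of $Q$. So the statement to be established is: $H$ is a subquasigroup if and only if $H$ is closed under all three of $\cdot$, $/$, $\backslash$.

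For the forward direction I would assume $(H,\cdot)\le(Q,\cdot)$ and fix $a,b\in H$. Since $(H,\cdot)$ is a quasigroup, the equation $a\cdot x=b$ has a solution $x_{0}\in H$; but $Q$ is a quasigroup, so this solution is the unique solution of $a\cdot x=b$ in $Q$, which by definition is $a\backslash b$. Hence $a\backslash b=x_{0}\in H$, so $(H,\backslash)$ is a groupoid. Symmetrically, solving $y\cdot a=b$ inside $H$ yields $b/a\in H$, so $(H,/)$ is a groupoid, and $(H,\cdot)$ is a groupoid by hypothesis. For the converse I would assume $(H,\cdot),(H,/),(H,\backslash)$ are groupoids, i.e.\ $H$ is closed under $\cdot$, $/$ and $\backslash$, and again fix $a,b\in H$. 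In $Q$ the unique solutions of $a\cdot x=b$ and $y\cdot a=b$ are $a\backslash b$ and $b/a$ respectively, using the defining equivalences $x\backslash y=z\Leftrightarrow x\cdot z=y$ and $x/y=z\Leftrightarrow z\cdot y=x$. Closure of $H$ under $\backslash$ and $/$ puts $a\backslash b$ and $b/a$ in $H$, and since these solutions were already unique in $Q\supseteq H$ they are a fortiori unique in $H$. Thus every equation $a\cdot x=b$ and $y\cdot a=b$ with $a,b\in H$ has a unique solution in $H$, so $(H,\cdot)$ is a quasigroup and $(H,\cdot)\le(Q,\cdot)$. The loop case needs no separate treatment, since a loop is in particular a quasigroup and the criterion refers only to the $\cdot,/,\backslash$ structure.

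I do not expect a genuine obstacle here; the one point requiring care is the ``coincidence of solutions'' step in both directions — one must invoke uniqueness of solutions in the ambient quasigroup $Q$ to identify the element produced inside $H$ with $a\backslash b$ (resp.\ $b/a$), and conversely to conclude that an element found in $Q$ and shown to lie in $H$ is automatically the unique solution within $H$. The only other bookkeeping concern is keeping the directions of the defining equivalences for $\backslash$ and $/$ consistent with Table~\ref{2.1}.
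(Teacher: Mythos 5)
Your argument is correct, and the standard one. Note that the paper itself offers no proof of this statement --- it is quoted from Pflugfelder \cite{14} as background --- so there is nothing internal to compare against; both directions of your transfer argument (uniqueness in $Q$ forces the solution found in $H$ to equal $a\backslash b$ resp.\ $b/a$; conversely closure under $\backslash$ and $/$ supplies solutions in $H$ whose uniqueness is inherited from $Q$) are exactly what is needed. The one place where your write-up is slightly too quick is the parenthetical loop case: saying that ``the criterion refers only to the $\cdot,/,\backslash$ structure'' establishes that $H$ is a subquasigroup, but $(H,\cdot)\le(Q,\cdot)$ for a loop $Q$ should mean that $H$ is a subloop, so you still owe the observation that the identity lies in $H$. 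This is immediate from what you already have --- for any $a\in H$ one has $a\backslash a\in H$, and $a\cdot(a\backslash a)=a$ forces $a\backslash a=e$ by uniqueness in $Q$ --- but it should be said.
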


\begin{myth}(Pflugfelder \cite{14})\label{Thm3}

Let $(Q, \cdot)$ be a group. Then, for any $\emptyset \ne H\subseteq Q$, the following statements are equivalent:
\begin{enumerate}
\item $(H, \cdot)$ is a subloop of $(Q, \cdot)$.
\item $(H, \cdot)$ is a subgroup of $(Q, \cdot)$.
\item $(H, /)$ is a groupoid.
\item $(H, \backslash)$ is a  groupoid.
\end{enumerate}
\end{myth}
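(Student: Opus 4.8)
The plan is to prove the four statements equivalent by first establishing $(1)\Leftrightarrow(2)$ directly and then tying $(3)$ and $(4)$ to $(2)$ in both directions, so that every item is pinned to ``$(H,\cdot)$ is a subgroup of $(Q,\cdot)$''. For the directions that manufacture groupoids out of an algebraic substructure I would invoke Theorem~\ref{Thm2}; for the converse directions I would first rewrite the parastrophic operations $/$ and $\backslash$ in group language and then apply the classical one-step subgroup test.

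For $(1)\Leftrightarrow(2)$: the implication $(2)\Rightarrow(1)$ is immediate, since a subgroup is a loop under the inherited product. For $(1)\Rightarrow(2)$ I would argue that if $(H,\cdot)$ is a subloop with identity $e_H$, then $e_H\cdot e_H=e_H$ forces $e_H$ to be idempotent in the group $(Q,\cdot)$, hence $e_H=e$; the loop-inverse of each $h\in H$ then satisfies $h\cdot h^{-1}=e$ in $Q$, so it agrees with the group inverse, and since associativity is inherited, $(H,\cdot)$ is a subgroup.

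Next I would record that in the group $(Q,\cdot)$ one has $x/y=xR_y^{-1}=xy^{-1}$ and $x\backslash y=yL_x^{-1}=x^{-1}y$ for all $x,y\in Q$, so that ``$(H,/)$ is a groupoid'' means $xy^{-1}\in H$ for all $x,y\in H$, and ``$(H,\backslash)$ is a groupoid'' means $x^{-1}y\in H$ for all $x,y\in H$. The implications $(2)\Rightarrow(3)$ and $(2)\Rightarrow(4)$ are then either checked directly (a subgroup is closed under $xy^{-1}$ and under $x^{-1}y$) or obtained uniformly from Theorem~\ref{Thm2}, which also yields $(1)\Rightarrow(3),(4)$ since a subloop is a subquasigroup. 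For $(3)\Rightarrow(2)$ I would take $a\in H\neq\emptyset$, note $aa^{-1}=e\in H$, then $ea^{-1}=a^{-1}\in H$, and finally $x(y^{-1})^{-1}=xy\in H$ for $x,y\in H$, so $H$ is a subgroup; $(4)\Rightarrow(2)$ goes symmetrically via $a^{-1}a=e\in H$, $a^{-1}e=a^{-1}\in H$, $(x^{-1})^{-1}y=xy\in H$. Assembling these implications gives the full equivalence.

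I expect the only mildly subtle point to be the bookkeeping in $(1)\Rightarrow(2)$, namely ensuring that the identity and inverses internal to the subloop coincide with those of $(Q,\cdot)$; the idempotent observation handles this cleanly. I would also emphasize that the one-step test used in $(3)\Rightarrow(2)$ and $(4)\Rightarrow(2)$ needs no finiteness assumption, so the theorem holds for arbitrary $\emptyset\neq H\subseteq Q$. Finally, it is worth remarking that this is the group specialization of Theorem~\ref{Thm2}, sharpened: over a group, closure under just one of the two divisions already forces $H$ to be a subgroup, whereas over a general quasigroup or loop one needs closure under $\cdot$, $/$ and $\backslash$ simultaneously.
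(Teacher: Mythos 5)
Your proof is correct. Note that the paper itself offers no proof of this statement: it is quoted as a known preliminary result of Pflugfelder, so there is no in-paper argument to compare against. Your derivation is complete and standard: the identification $x/y=xy^{-1}$, $x\backslash y=x^{-1}y$ in a group is right, the idempotency argument pins the subloop identity to the group identity in $(1)\Rightarrow(2)$, and the one-step subgroup test correctly handles $(3)\Rightarrow(2)$ and $(4)\Rightarrow(2)$ without any finiteness assumption. The only redundancy is deriving $(1)\Rightarrow(3),(4)$ both via Theorem~\ref{Thm2} and via $(2)$, which is harmless.
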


\begin{mydef}(Distributive Quasigroup)\cite{14}

A quasigroup $(Q,\cdot)$ is called a  left distributive quasigroup if it satisfies the left distributive law $x(yz) = xy \cdot xz$. A quasigroup $(Q,\cdot)$ is called a  right distributive quasigroup if it satisfies the right distributive law
$ (yz)x = yx \cdot zx$. A quasigroup which is both a left and right distributive quasigroup is called a 
distributive quasigroup.
\end{mydef}

\begin{myth}(Pflugfelder \cite{14})\label{Thm4}

If $(Q, \cdot,/,\backslash )$ is a distributive quasigroup, then the following are true for all $x, y, z \in Q$.
\begin{enumerate}
\item $Q$ is idempotent i.e. $x^{2} = x$.
\item $L_x$ and $R_x$ are automorphisms of $(Q, \cdot)$.
\item $Q$ is flexible i.e. $x(yx)= (xy)x$.
\item $x(y \backslash z) = (xy) \backslash (xz),~ (y/z)x = (yx)/(zx),~ x \backslash (yz) = (x \backslash y)(x \backslash z),~ (yz)/x = (y/x)(z/x)$.
\end{enumerate}
\end{myth}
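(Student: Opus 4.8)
The plan is to establish the four assertions in the order (2), (1), (3), (4), since the fact that the one-sided translations are automorphisms is both almost immediate from the hypotheses and the engine that drives everything else.

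First I would observe that (2) is essentially a restatement of the defining identities. Recalling the paper's convention $(yz)L_x = x\cdot(yz)$ and $(yL_x)(zL_x) = (xy)(xz)$, the left distributive law $x(yz)=(xy)(xz)$ says exactly that $(yz)L_x=(yL_x)(zL_x)$, i.e. $L_x$ is an endomorphism of $(Q,\cdot)$; likewise $(yz)x=(yx)(zx)$ makes $R_x$ an endomorphism. Since $(Q,\cdot)$ is a quasigroup, $L_x$ and $R_x$ are bijections, and a bijective endomorphism of a quasigroup is an automorphism: if $\sigma$ is a bijection with $\sigma(ab)=\sigma(a)\sigma(b)$, then substituting $a=\sigma^{-1}(c)$, $b=\sigma^{-1}(d)$ gives $\sigma^{-1}(cd)=\sigma^{-1}(c)\sigma^{-1}(d)$. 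Hence $L_x,R_x\in\mathrm{Aut}(Q,\cdot)$.

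Next, for idempotency (1), I would fix $a\in Q$, put $u=a\backslash a$ so that $a\cdot u=a$, and substitute $u$ for the first variable in the left distributive law: $a(uw)=(au)(aw)=a(aw)$ for every $w\in Q$. Cancelling $L_a$ on the left gives $uw=aw$ for all $w$, and cancelling any one right translation $R_w$ gives $u=a$; since $a\backslash a=a$ is equivalent to $a\cdot a=a$, this shows $Q$ is idempotent (note only left distributivity is used). Assertion (3) is then a one-liner: applying the automorphism $L_x$ to the product $y\cdot x$ yields $x(yx)=(xy)(xx)=(xy)x$, the last equality by idempotency.

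Finally, for (4) I would first record the standard fact that any automorphism $\sigma$ of $(Q,\cdot)$ is automatically an automorphism of the full quasigroup $(Q,\cdot,/,\backslash)$: reading $z=x\backslash y \iff xz=y$ under $\sigma$ gives $\sigma(x\backslash y)=\sigma(x)\backslash\sigma(y)$, similarly $\sigma(x/y)=\sigma(x)/\sigma(y)$, and $\sigma^{-1}$ is again such an automorphism. Since in any quasigroup $x\backslash w = wL_x^{-1}$ and $w/x = wR_x^{-1}$, applying the four automorphisms $L_x$, $L_x^{-1}$, $R_x$, $R_x^{-1}$ (all available by (2)) to the appropriate products yields, in turn, $x(y\backslash z)=(xy)\backslash(xz)$, then $x\backslash(yz)=(x\backslash y)(x\backslash z)$, then $(y/z)x=(yx)/(zx)$, and finally $(yz)/x=(y/x)(z/x)$. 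The only step requiring genuine insight is the idempotency argument; the substitution $u=a\backslash a$ is the one non-mechanical move, and I would organise the writeup so that (2) and (1) are settled first and then reused freely in (3) and (4).
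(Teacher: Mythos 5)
Your proposal is correct. Note that the paper does not prove this statement at all: it is imported verbatim from Pflugfelder's book (Theorem~\ref{Thm4} is stated with the citation \cite{14} and used as a black box), so there is no internal proof to compare against. Your argument is the standard one and each step checks out: left/right distributivity literally says $L_x$ and $R_x$ are bijective endomorphisms, hence automorphisms; the substitution $u=a\backslash a$ into $a(uw)=(au)(aw)$ followed by cancelling $L_a$ and then $R_w$ gives idempotency; flexibility follows by applying $L_x$ to $yx$ and using $xx=x$; and the four identities in (4) are exactly the statements that $L_x,L_x^{-1},R_x,R_x^{-1}$ respect $\backslash$ and $/$, which holds for any automorphism of a quasigroup. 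This would serve as a self-contained proof where the paper offers only a reference.
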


\begin{myth}(Pflugfelder \cite{14})\label{Thm5}

If $(Q, \cdot,/,\backslash )$ is a distributive quasigroup, then $(Q,\circledast),(Q,/),(Q,\backslash),(Q,//),(Q,\backslash\backslash)$ are distributive quasigroups.
\end{myth}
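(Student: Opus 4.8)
The plan is to dispose of the five parastrophes in three stages, using two standard facts throughout: \emph{(P1)} an automorphism of a quasigroup is simultaneously an automorphism of each of its five parastrophes (apply it to the equivalences $x\star z=y$, $z\star y=x$, $y\star x=z$, and so on that define $\backslash$, $/$, $\circledast$, $//$, $\backslash\backslash$); and \emph{(P2)} a quasigroup is left (respectively right) distributive exactly when every one of its left (respectively right) translations is an automorphism, since a translation of a quasigroup is a bijection, so a translation that is an endomorphism is an automorphism. By Theorem~\ref{Thm4}(2) the maps $L_a,R_a$ are automorphisms of $(Q,\star)$, hence so are $L_a^{-1},R_a^{-1}$, and by (P1) all four families act as automorphisms on every parastrophe. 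Expanding the distributive laws for the opposite parastrophe shows, for an arbitrary quasigroup, that $(Q,\circledast)$ is left distributive iff $(Q,\star)$ is right distributive and right distributive iff $(Q,\star)$ is left distributive; so $(Q,\circledast)$ is distributive, and ``being distributive'' is stable under passage to the opposite. Since $(Q,//)$ and $(Q,\backslash\backslash)$ are, by Table~\ref{2.1}, the opposites of $(Q,/)$ and $(Q,\backslash)$, it remains only to show that $(Q,/)$ and $(Q,\backslash)$ are distributive.

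Here one half of each is free. From $x\backslash y=yL_x^{-1}$, the left translation of $(Q,\backslash)$ by $a$ is the permutation $L_a^{-1}$, which by (P1) is an automorphism of $(Q,\backslash)$; hence $(Q,\backslash)$ is left distributive by (P2). Dually, from $x/y=xR_y^{-1}$, the right translation of $(Q,/)$ by $a$ is $R_a^{-1}$, so $(Q,/)$ is right distributive. What is left is the other half in each case, and the heart of the matter is the identity
\[
(u\star v)\backslash a=(u\backslash a)\star(v\backslash a)\qquad(u,v,a\in Q),
\]
which asserts precisely that the right translation $y\mapsto y\backslash a$ of $(Q,\backslash)$ is an endomorphism, hence an automorphism, of $(Q,\star)$, so that $(Q,\backslash)$ is right distributive.

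To prove this identity I would set $p=u\backslash a$ (so $u\star p=a$) and $q=v\backslash p$ (so $v\star q=p$). Left distributivity of $(Q,\star)$ gives
\[
(u\star v)\star\bigl(u\star(v\backslash p)\bigr)=u\star\bigl(v\star(v\backslash p)\bigr)=u\star p=a,
\]
so $(u\star v)\backslash a=u\star(v\backslash p)$. On the other hand the clause $x\backslash(yz)=(x\backslash y)(x\backslash z)$ of Theorem~\ref{Thm4}(4) gives $v\backslash a=v\backslash(u\star p)=(v\backslash u)\star q$, whence, by right distributivity of $(Q,\star)$,
\[
(u\backslash a)\star(v\backslash a)=p\star\bigl((v\backslash u)\star q\bigr)=(v\star q)\star\bigl((v\backslash u)\star q\bigr)=\bigl(v\star(v\backslash u)\bigr)\star q=u\star q=u\star(v\backslash p).
\]
Comparing the two displays yields the identity, so $(Q,\backslash)$ is distributive; distributivity of $(Q,/)$ follows by the mirror argument (interchange left with right and $\backslash$ with $/$, and use the clause $(yz)/x=(y/x)(z/x)$ of Theorem~\ref{Thm4}(4)). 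Passing to opposites then gives that $(Q,//)$ and $(Q,\backslash\backslash)$ are distributive as well, which completes the proof.

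The first two stages are bookkeeping; the real obstacle is the displayed identity. One might fear that it needs the Belousov structure theory for distributive quasigroups, but it does not: it falls out of using \emph{both} distributive laws together with the single ``mixed'' identity of Theorem~\ref{Thm4}(4), and the only subtle point is choosing the substitutions $p=u\backslash a$ and $q=v\backslash p$ so that both sides of the identity collapse to $u\star(v\backslash p)$.
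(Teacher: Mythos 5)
Your proof is correct, and there is in fact nothing in the paper to compare it against: Theorem~\ref{Thm5} is imported verbatim from Pflugfelder's book as a background result and is never proved here, so you have supplied a genuine argument where the paper only cites one. Every step checks out. (P1) and (P2) are standard and correctly justified (translations of a quasigroup are bijections, and an automorphism of $(Q,\star)$ automatically respects each parastrophic operation since those operations are defined by equations in $\star$). The reduction is sound: Table~\ref{2.1} does exhibit $//$ and $\backslash\backslash$ as the opposites of $/$ and $\backslash$, and passing to the opposite swaps left and right distributivity, so only $(Q,/)$ and $(Q,\backslash)$ need work. The ``free'' halves are right: the left translation of $(Q,\backslash)$ by $a$ is $L_a^{-1}$ and the right translation of $(Q,/)$ by $a$ is $R_a^{-1}$, both automorphisms by Theorem~\ref{Thm4}(2) and (P1). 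The key identity $(u\star v)\backslash a=(u\backslash a)\star(v\backslash a)$ is correctly verified: with $p=u\backslash a$ and $q=v\backslash p$, left distributivity gives $(u\star v)\star(u\star q)=u\star(v\star q)=u\star p=a$, so the left side equals $u\star q$, while Theorem~\ref{Thm4}(4) gives $v\backslash a=(v\backslash u)\star q$ and right distributivity collapses $p\star\bigl((v\backslash u)\star q\bigr)=(v\star q)\star\bigl((v\backslash u)\star q\bigr)$ to $\bigl(v\star(v\backslash u)\bigr)\star q=u\star q$ as well. The one point you should state explicitly rather than leave implicit is that the identity exhibits $y\mapsto y\backslash a$ as an automorphism of $(Q,\star)$, and you must apply (P1) once more to turn that into an automorphism of $(Q,\backslash)$ before (P2) delivers right distributivity of $(Q,\backslash)$; the map is indeed a bijection since $y\backslash a=z$ iff $y=a/z$. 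The mirror argument for $(Q,/)$ is legitimate because the hypothesis and all the tools are symmetric under reversal of the multiplication.
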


\begin{myth}(Pflugfelder \cite{14})\label{Thm6}

Let $Q$ be a distributive quasigroup such that $|Q| > 1$, then  $N_\rho (G)=\emptyset =N_\lambda (G)$.
\end{myth}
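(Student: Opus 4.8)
The plan is to settle the left nucleus first and then deduce the statement for the right nucleus by passing to the opposite parastrophe. Throughout, $N_\lambda(Q)=\{a\in Q:\ (ax)y=a(xy)\ \text{for all}\ x,y\in Q\}$ and $N_\rho(Q)=\{a\in Q:\ (xy)a=x(ya)\ \text{for all}\ x,y\in Q\}$ (here $G$ in the statement is the quasigroup $Q$).

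First I would show that a left--nuclear element of a left distributive quasigroup must be a left identity. Suppose $a\in N_\lambda(Q)$, so $(ax)y=a(xy)$ for all $x,y$. Left distributivity gives $a(xy)=(ax)(ay)$, hence $(ax)(ay)=(ax)y$ for all $x,y$. Since $L_a$ is a permutation of $Q$, the element $ax$ ranges over all of $Q$ as $x$ does; writing $u=ax$ we obtain $u(ay)=uy$ for all $u,y\in Q$, and left--cancelling $u$ (legitimate because $L_u$ is injective in a quasigroup) yields $ay=y$ for every $y$. Thus $a$ is a left identity of $(Q,\cdot)$.

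Next I would show that a distributive quasigroup with $|Q|>1$ has no left identity, which finishes the case of $N_\lambda$. Assume $a$ is a left identity. Apply the right distributive law $(pq)r=(pr)(qr)$ with $p=a$: since $aq=q$ and $ar=r$ this reads $qr=r(qr)$ for all $q,r\in Q$. Fix $r$; as $q$ ranges over $Q$ the product $qr$ ranges over all of $Q$ (because $R_r$ is a permutation), so $ru=u$ for every $u\in Q$ and every $r\in Q$, i.e. $x\cdot y=y$ identically on $Q$. But then $y\cdot a=a$ for all $y$, so the equation $y\cdot a=b$ is unsolvable whenever $b\neq a$, contradicting the quasigroup axioms as soon as $|Q|>1$. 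Hence $N_\lambda(Q)=\emptyset$. For the right nucleus I would invoke Theorem~\ref{Thm5}: the opposite parastrophe $(Q,\circledast)$ with $x\circledast y=y\cdot x$ is again a distributive quasigroup, of the same cardinality; a one--line computation shows $a\in N_\rho(Q,\cdot)$ if and only if $a\circledast(y\circledast x)=(a\circledast y)\circledast x$ for all $x,y$, i.e. $a\in N_\lambda(Q,\circledast)$, and the previous two steps applied to $(Q,\circledast)$ give $N_\lambda(Q,\circledast)=\emptyset$, whence $N_\rho(Q,\cdot)=\emptyset$. (Equivalently, one may run the dual argument directly: right distributivity makes a right--nuclear element a right identity, and then left distributivity collapses $Q$.)

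The only step with genuine content is the collapsing argument: recognising that the seemingly weak hypothesis ``$(Q,\cdot)$ has a one--sided identity'' already forces $|Q|=1$ for a distributive quasigroup. Everything else is routine manipulation with the two distributive laws together with the fact that all left and right translations of a quasigroup are permutations, so that one may cancel on either side; the one thing to watch carefully is performing each cancellation on the correct side.
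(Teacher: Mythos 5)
Your argument is correct, and it is worth noting that the paper itself offers no proof to compare against: Theorem~\ref{Thm6} is imported verbatim from Pflugfelder's book and used as a black box, so your write-up is a genuine self-contained derivation of a cited result. Each step checks out. The identity $(ax)y=a(xy)$ combined with left distributivity $a(xy)=(ax)(ay)$ gives $u y=u(ay)$ for all $u=ax$, and since $L_a$ is onto and $L_u$ is injective this correctly forces $ay=y$, so a left-nuclear element is a left identity. The collapsing step is the right one: substituting a left identity into the right distributive law yields $qr=r(qr)$, surjectivity of $R_r$ turns this into $x\cdot y=y$ identically, and then $R_a$ is a constant map, impossible for $|Q|>1$. (Equivalently one could observe that $x\cdot y=y$ makes every element a left identity, violating idempotency plus the Latin-square property; your unsolvability argument is just as clean.) The reduction of $N_\rho(Q,\cdot)$ to $N_\lambda(Q,\circledast)$ is a correct one-line computation, and the appeal to Theorem~\ref{Thm5} for distributivity of the opposite parastrophe is legitimate --- though for the opposite operation alone one hardly needs it, since the left and right distributive laws for $(Q,\circledast)$ are literally the right and left distributive laws for $(Q,\cdot)$. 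The only cosmetic point is that the statement's $N_\rho(G)$, $N_\lambda(G)$ should read $N_\rho(Q)$, $N_\lambda(Q)$, which you correctly flag.
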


\begin{myth}(Pflugfelder \cite{14})\label{Thm7}

Let $H\le Q$ where $(Q,\cdot )$ is a distributive quasigroup. Then, all left cosets $x\cdot H$ and all right cosets $H\cdot x$ of $H$ are subquasigroups of $Q$ for any $x\in Q$.
\end{myth}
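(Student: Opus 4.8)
The plan is to recognise each coset as the image of $H$ under a translation map and then exploit the fact that, in a distributive quasigroup, these translations are automorphisms. By the definitions of $L_x$ and $R_x$, the left coset $x\cdot H=\{x\cdot h:h\in H\}$ is exactly $HL_x$, the image of $H$ under $L_x$, and the right coset $H\cdot x=\{h\cdot x:h\in H\}$ is $HR_x$, the image of $H$ under $R_x$. Since $(Q,\cdot)$ is distributive, Theorem~\ref{Thm4}(2) gives that $L_x$ and $R_x$ are automorphisms of $(Q,\cdot)$ for every $x\in Q$. Hence it suffices to prove the general statement: the image of a subquasigroup of $(Q,\cdot)$ under an automorphism $\phi$ of $(Q,\cdot)$ is again a subquasigroup of $(Q,\cdot)$.

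For that, I would first note that $\phi(H)$ is closed under $\cdot$: given $h_1,h_2\in H$ we have $h_1h_2\in H$, so $\phi(h_1)\cdot\phi(h_2)=\phi(h_1h_2)\in\phi(H)$. Thus the restriction $\phi|_H\colon(H,\cdot)\to(\phi(H),\cdot)$ is a bijective homomorphism of groupoids, i.e. an isomorphism; since $(H,\cdot)$ is a quasigroup, its isomorphic image $(\phi(H),\cdot)$ is a quasigroup, and therefore $\phi(H)$ is a subquasigroup of $Q$. (Alternatively, one checks that $\phi$ preserves the two division operations — from $a\cdot c=b$ one gets $\phi(a)\cdot\phi(c)=\phi(b)$, hence $\phi(a\backslash b)=\phi(a)\backslash\phi(b)$, and dually $\phi(a/b)=\phi(a)/\phi(b)$ — so that $\phi(H)$ is simultaneously a subgroupoid of $(Q,\cdot)$, $(Q,/)$ and $(Q,\backslash)$, and is thus a subquasigroup by Theorem~\ref{Thm2}.)

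Taking $\phi=L_x$ then yields $x\cdot H=HL_x\le Q$, and taking $\phi=R_x$ yields $H\cdot x=HR_x\le Q$, for every $x\in Q$, which is the assertion. I do not anticipate a serious obstacle: once Theorem~\ref{Thm4}(2) is invoked, the argument is essentially immediate. The one point deserving care is the claim that an automorphism of the multiplicative reduct $(Q,\cdot)$ automatically respects left and right division — this is what upgrades ``closed under $\cdot$'' to ``subquasigroup'' for the image $\phi(H)$, and it is exactly the short computation in the parenthetical above. It is also worth recording that the hypothesis does not require $x\in H$; the argument uses only that $L_x$ and $R_x$ are automorphisms, which Theorem~\ref{Thm4} guarantees for every $x\in Q$.
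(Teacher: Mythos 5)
Your argument is correct and complete. The paper itself states this result as a citation from Pflugfelder's book without reproducing a proof, and your route is the standard one: under the paper's conventions $x\cdot H=HL_x$ and $H\cdot x=HR_x$, distributivity makes $L_x$ and $R_x$ automorphisms by Theorem~\ref{Thm4}(2), and the image of a subquasigroup under an automorphism is a subquasigroup (your parenthetical check that an automorphism of $(Q,\cdot)$ also preserves $\backslash$ and $/$, combined with Theorem~\ref{Thm2}, correctly closes the only gap worth worrying about).
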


\begin{myth}(Pflugfelder \cite{14})\label{Thm8}

Let $H\le Q$ where $Q$ is a distributive quasigroup. Then, every left and right cosets of $H$ are isomorphic to $H$ and to each other. That is, $x\cdot H=xH\cong H\cong Hx=H\cdot x$ for any $x\in Q$.
\end{myth}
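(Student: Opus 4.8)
The plan is to lean on Theorem~\ref{Thm4}(2), which says that in a distributive quasigroup $(Q,\cdot,/,\backslash)$ every left translation $L_x$ and every right translation $R_x$ is an automorphism of $(Q,\cdot)$. The first step is to upgrade this to an automorphism of the full structure: if $\varphi$ is a bijection of $Q$ with $\varphi(ab)=\varphi(a)\varphi(b)$ for all $a,b\in Q$, then from the equivalences $a\backslash b=c\Leftrightarrow ac=b$ and $a/b=c\Leftrightarrow cb=a$ one reads off $\varphi(a\backslash b)=\varphi(a)\backslash\varphi(b)$ and $\varphi(a/b)=\varphi(a)/\varphi(b)$; hence each $L_x$ and each $R_x$ is an automorphism of $(Q,\cdot,/,\backslash)$.

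Next, fix $x\in Q$ and observe that $xH=\{xh:h\in H\}=L_x(H)$ and $Hx=R_x(H)$. Since $H\le Q$ and the image of a subquasigroup under an automorphism is again a subquasigroup, $xH$ and $Hx$ are subquasigroups of $Q$ (this recovers the part of Theorem~\ref{Thm7} that we need, and could alternatively just be cited), and the restricted maps $L_x|_H\colon H\to xH$ and $R_x|_H\colon H\to Hx$ are bijective homomorphisms, hence isomorphisms. Therefore $H\cong xH$ and $H\cong Hx$ for every $x\in Q$.

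Finally, since isomorphism of quasigroups is an equivalence relation, transitivity gives $xH\cong Hx$ for each $x$, and for arbitrary $x,y\in Q$ it gives $xH\cong H\cong yH$ together with $Hx\cong H\cong Hy$; combining these, $x\cdot H=xH\cong H\cong Hx=H\cdot x$ and all left and right cosets of $H$ are mutually isomorphic. The only place that calls for genuine (if brief) care is the verification that the translation automorphisms respect $\backslash$ and $/$, so that the cosets are honestly subquasigroups in the signature $(\cdot,/,\backslash)$ rather than mere groupoids; after that, the statement is essentially a one-line consequence of Theorem~\ref{Thm4}, so I do not expect a serious obstacle.
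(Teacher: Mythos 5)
Your proposal is correct: the paper states Theorem~\ref{Thm8} as a quoted result of Pflugfelder and gives no proof of its own, and your argument is the standard one — $xH=L_x(H)$ and $Hx=R_x(H)$ with $L_x,R_x$ automorphisms by Theorem~\ref{Thm4}(2), the observation that a multiplicative bijective endomorphism automatically respects $\backslash$ and $/$, and transitivity of isomorphism to get all cosets mutually isomorphic. Nothing further is needed.
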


\begin{myth}(Pflugfelder \cite{14})\label{Thm9}

Let $H\lhd Q$ where $Q$ is a distributive quasigroup. Then, $xH,Hx\lhd Q$ for any $x\in Q$.
\end{myth}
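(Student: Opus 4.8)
The plan is to reduce the statement to a single general principle: the image of a normal subquasigroup under an automorphism of $Q$ is again a normal subquasigroup. This will suffice because, in a distributive quasigroup, the left and right translations $L_x,R_x$ are automorphisms of $(Q,\cdot)$ by Theorem~\ref{Thm4}(2), and, with the convention $yL_x=x\cdot y$ and $yR_x=y\cdot x$, one has $HL_x=xH$ and $HR_x=Hx$; so $xH$ is the automorphic image of $H$ under $L_x$ and $Hx$ is its image under $R_x$.

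First I would unwind the hypothesis $H\lhd Q$: there is a normal equivalence (normal congruence) relation $\theta$ on $(Q,\cdot)$ of which $H$ is a class. For an arbitrary automorphism $\phi$ of $(Q,\cdot)$ I would then define
\[
a\,\theta^{\phi}\,b \iff \phi^{-1}(a)\,\theta\,\phi^{-1}(b),
\]
observe that $\theta^{\phi}$ is an equivalence relation whose classes are exactly the sets $\phi(C)$ with $C$ a $\theta$-class (so $\phi(H)$ is a $\theta^{\phi}$-class), and then check that $\theta^{\phi}$ satisfies the three normality conditions in the definition of a normal subquasigroup. Each of these is a one-line transfer of the corresponding property of $\theta$ through the homomorphisms $\phi$ and $\phi^{-1}$: e.g. $ca\,\theta^{\phi}\,cb$ gives $\phi^{-1}(c)\phi^{-1}(a)=\phi^{-1}(ca)\,\theta\,\phi^{-1}(cb)=\phi^{-1}(c)\phi^{-1}(b)$, hence $\phi^{-1}(a)\,\theta\,\phi^{-1}(b)$ by left cancellation in $\theta$, i.e. $a\,\theta^{\phi}\,b$; the right-cancellation and compatibility conditions are handled identically. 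Since $\phi(H)$ is thus a class of a normal congruence and, by Theorem~\ref{Thm7}, also a subquasigroup of $Q$, we obtain $\phi(H)\lhd Q$.

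Finally I would specialise $\phi$ to $L_x$ and to $R_x$ to conclude $xH=HL_x\lhd Q$ and $Hx=HR_x\lhd Q$ for every $x\in Q$. I expect no serious obstacle here; the only point deserving care is verifying that $\theta^{\phi}$ is normal in the full sense --- both cancellation laws together with the compatibility condition --- rather than just an equivalence relation or an ordinary congruence, and this is precisely the step where the invertibility of the automorphism $\phi$ is used.
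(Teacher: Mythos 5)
Your proposal is correct, and it is the standard argument: the paper itself states this result as a citation to Pflugfelder (Theorem~\ref{Thm9}) and gives no proof, so there is nothing in-text to compare against. Transporting the normal congruence $\theta$ along an automorphism $\phi$ via $a\,\theta^{\phi}\,b \iff \phi^{-1}(a)\,\theta\,\phi^{-1}(b)$, checking the two cancellation conditions and the compatibility condition, and then specialising $\phi$ to $L_x$ and $R_x$ (automorphisms by Theorem~\ref{Thm4}(2), with $HL_x=xH$ and $HR_x=Hx$ under the paper's conventions) is exactly the right reduction, and your use of Theorem~\ref{Thm7} to guarantee that $xH$ and $Hx$ are subquasigroups closes the remaining requirement in the paper's definition of normality.
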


\begin{myth}(Pflugfelder \cite{14})\label{Thm10}

Let $H\lhd Q$ where $Q$ is a distributive quasigroup. Then, $Q/H$ is a commutative distributive quasigroup.
\end{myth}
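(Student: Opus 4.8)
The plan is to realise $Q/H$ as the quotient $Q/\theta$, where $\theta$ is a normal congruence on $(Q,\cdot,/,\backslash)$ having $H$ as one of its classes (such a $\theta$ exists by the definition of $H\lhd Q$), and then to verify the four assertions in increasing order of difficulty: that $Q/H$ is a quasigroup, that it is distributive, that it is idempotent (and flexible), and finally that it is commutative. The last item is the only one that genuinely needs normality of $H$ rather than just $H\le Q$, and it is where I expect the work to lie.

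For the first three items the arguments are routine. Writing $\overline{x}$ for the $\theta$-class of $x$ and setting $\overline{x}\cdot\overline{y}:=\overline{xy}$, condition~(3) in the definition of a normal equivalence makes $\cdot$ well defined on classes, conditions~(1) and~(2) give left and right cancellation on classes, and together with the obvious solvability (the class of $a\backslash b$ solves $\overline{a}\,\overline{z}=\overline{b}$) these make $(Q/H,\cdot)$ a quasigroup; one checks in the same way that $\theta$ is compatible with $/$ and $\backslash$, so the natural surjection $(Q,\cdot,/,\backslash)\to(Q/H,\cdot,/,\backslash)$ is a quasigroup homomorphism. Distributivity then passes to $Q/H$ because the two distributive laws are equational identities and such identities are inherited by homomorphic images (one may also invoke Theorem~\ref{Thm5} here, since taking parastrophes commutes with passing to a homomorphic image). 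Applying Theorem~\ref{Thm4} to the distributive quasigroup $Q/H$ now yields at once that $Q/H$ is idempotent, that all of its translations are automorphisms, and that it is flexible.

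The crux is commutativity. Because $H$ is a single $\theta$-class, it collapses to one idempotent element $\overline{e}\in Q/H$, and the idea is to show first that $\overline{e}$ acts symmetrically, i.e. $\overline{e}\,\overline{a}=\overline{a}\,\overline{e}$ for all $\overline{a}$, and then to bootstrap this to full commutativity. For the first part I would use Theorems~\ref{Thm7}--\ref{Thm9}: each left coset $xH$ and each right coset $Hx$ is normal in $Q$, and all cosets are isomorphic to $H$ and to one another, so in $Q/H$ the automorphisms $L_{\overline{e}}$ and $R_{\overline{e}}$ (Theorem~\ref{Thm4}(2)) both fix $\overline{e}$ and act compatibly on the images of the cosets; evaluating the two distributive laws at $\overline{e}$ and using $\overline{e}\,\overline{e}=\overline{e}$ then forces $L_{\overline{e}}=R_{\overline{e}}$. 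For the bootstrap I would express an arbitrary product $\overline{a}\,\overline{b}$ through translations by $\overline{e}$ and the four mixed identities of Theorem~\ref{Thm4}(4) relating $\cdot$ with $\backslash$ and $/$, and conclude $\overline{a}\,\overline{b}=\overline{b}\,\overline{a}$. The delicate point throughout, and the main obstacle, is the coset bookkeeping: in an idempotent non-loop quasigroup a left coset $xH$ need not contain $x$, so one must keep careful track of which $\theta$-class each coset actually is; getting that identification right is what makes normality do real work, and it is essentially the whole content of the proof.
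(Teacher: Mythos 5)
The paper does not actually prove this statement: Theorem~\ref{Thm10} is imported verbatim from Pflugfelder's book \cite{14} as background material, so there is no in-paper argument to compare yours against. Judged on its own terms, the first three quarters of your proposal are fine and routine: realising $Q/H$ as $Q/\theta$ for a normal congruence $\theta$ having $H$ as a class, checking that the three congruence conditions make $(Q/\theta,\cdot,/,\backslash)$ a well-defined homomorphic image of $(Q,\cdot,/,\backslash)$, noting that the distributive laws descend because they are identities, and then invoking Theorem~\ref{Thm4} for idempotency and flexibility.

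The gap is exactly where you predicted the work would lie, and it is not filled. The step ``evaluating the two distributive laws at $\overline{e}$ and using $\overline{e}\,\overline{e}=\overline{e}$ then forces $L_{\overline{e}}=R_{\overline{e}}$'' is false: those evaluations say only that $L_{\overline{e}}$ and $R_{\overline{e}}$ are endomorphisms fixing $\overline{e}$, which does not make them equal. Concretely, take $Q=\mathbb{Z}_7$ with $x\cdot y=2x+6y$: this is an idempotent, medial, hence distributive quasigroup that is not commutative ($1\cdot 0=2$ but $0\cdot 1=6$), and $L_0(y)=6y$, $R_0(x)=2x$ are distinct automorphisms fixing $0$. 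Moreover $H=\{0\}$ is a subquasigroup and is a class of the equality congruence (which satisfies all three conditions of the paper's definition of a normal equivalence), so $H\lhd Q$ in the sense used here, yet $Q/H\cong Q$ is not commutative. So no bootstrap from the stated hypotheses can succeed: either Pflugfelder's original theorem carries hypotheses that the transcription in the paper has dropped (a more restrictive notion of normal subquasigroup or of the quotient), or the commutativity claim must simply be sourced rather than proved. Independently of this, even if one granted $L_{\overline{e}}=R_{\overline{e}}$, your passage from that to $\overline{a}\,\overline{b}=\overline{b}\,\overline{a}$ via ``translations by $\overline{e}$ and the four mixed identities'' is a statement of intent with no identity chain supplied. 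As it stands, the only non-routine assertion of the theorem --- commutativity of $Q/H$ --- is unproved.
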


\subsection{Soft Sets and Some Operations}
\paragraph{}
We start with the notion of soft sets and operations defined on it. We refer readers to \cite{11, 3, sma2, 4,10} for earlier works on soft sets, soft groups and their operations. Throughout this subsection, $Q$ denotes an initial universe, $E$ is the set of parameters and $A \subseteq E$. 

\begin{mydef}(Soft Sets, Soft Subset, Equal Soft Sets)\cite{11,3,10}

Let $Q$ be a set and $E$ be a set of parameters. For $A\subset E$, the pair $(F, A)$ is called a soft set over $Q$ if $F(a) \subset Q$ for all $a \in A$, where $F$ is a function mapping $A$ into the set of all non-empty subsets of $Q$, i.e $F: A \longrightarrow 2^{Q}\backslash\{\emptyset\}$.
A soft set $(F, A)$ over a set $Q$ is identified or represented as a set of ordered pairs: $(F, A) = \{(a,F(a)) : a\in A~\textrm{ and}~ F(a) \in 2^Q\}$. The set
of all soft sets, over $Q$ under a set of parameters $A$, is denoted by $SS(Q_A)$.
\subparagraph{}
Let $(F, A)$ and $(H, B)$ be two soft sets over a common universe $U$, then $(H, B)$ is called a soft subset of $(F, A)$ if
\begin{enumerate}
\item $B \subseteq A$; and
\item $H(x) \subseteq F(x)$ for all $x \in B$.
\end{enumerate}
This is usually expressed as $ (F, A)\supset (H, B)$ or $(H, B)\subset (F, A)$, and $(F, A)$  is said to be a soft super set of $(H, B)$.

Two soft sets $(F, A)$ and $(H, B)$ over a common universe $U$ are said to be soft equal if $(F, A)$ is a soft subset of $(H, B)$ and $(H, B)$ is a soft subset of $(F, A)$.
\end{mydef}
\begin{mydef}(Restricted Intersection)\cite{sma2,4,10}

Let $(F, A)$ and $(G, B)$ be two soft sets over a common universe $U$ such that $A \cap B\neq\emptyset$. Then their restricted intersection is $(F, A) \cap_R (G, B) = (H, C)$ where $(H, C)$ is defined as $H(c) = F(c) \cap G(c)$ for all $c \in C$, where $C = A \cap B$.
\end{mydef} 
\begin{mydef}(Extended Intersection)\cite{sma2,4,10}

The extended intersection of two soft sets $(F, A)$ and $(G, B)$ over a common universe $U$ is the soft set $(H, C)$, where $C$ = $A \cup B$, and for all $x\in C$, $H(x)$ is defined as
\begin{displaymath}
H(x)=\left\{\begin{array}{lll}
F(x) & \mbox{if}\hspace{.2in} x\in A - B\\
G(x)  & \mbox{if}\hspace{.2in} x\in B - A\\
F(x) \cap G(x) & \mbox{if}\hspace{.2in} x\in A \cap B.
\end{array}\right.
\end{displaymath}
\end{mydef} 

\begin{mydef}(Extended Union)\cite{sma2,4,10}
	
The union of two soft sets $(F, A)$ and $(G, B)$ over $U$ is denoted by  $(F, A) \bigcup (G, B)$ and is a soft set $(H, C)$ over $U$, such that $C$  = $A \cup B$, $\forall x \in C$\\  and 
\begin{displaymath}	
H(x)=\left\{\begin{array}{lll}
F(x) & \mbox{if}\hspace{.2in} x\in A - B\\
G(x)  & \mbox{if}\hspace{.2in} x\in B - A\\
F(x) \cup G(x) & \mbox{if}\hspace{.2in} x\in A \cap B.
\end{array}\right.
\end{displaymath}
\end{mydef} 

\section{Main Results}
\subsection{Soft Quasigroups and Soft Loops}
\begin{mydef}(Soft groupoid, Soft quasigroup and Soft loop)\label{softqg}\cite{oyem, tony}

Let $(Q,\cdot)$ be a groupoid (quasigroup, loop) and $E$ be a set of parameters. For $A\subset E$, the pair $(F, A)_{(Q,\cdot)}=(F, A)_Q$ is called a soft groupoid (quasigroup, loop) over $Q$ if $F(a)$ is a subgroupoid (subquasigroup, subloop) of $Q$ for all $a \in A$, where $F: A \longrightarrow 2^{Q}\backslash\{\emptyset\}$. A soft groupoid (quasigroup, subloop) $(F, A)_{(Q,\cdot)}$ over a groupoid (quasigroup, loop) $(Q,\cdot)$ is identified or represented as a set of ordered pairs: $(F, A)_{(Q,\cdot)} = \{\left(a,(F(a),\cdot)\right) : a\in A~\textrm{ and}~ F(k) \in 2^Q\}$.
\end{mydef}
\begin{myrem}
Based on Definition \ref{softqg}, a soft quasigroup is a soft groupoid, but the converse is not necessarily true. A soft groupoid (quasigroup) is said to be finite if its underlying groupoid (quasigroup) is finite.
\end{myrem}
\begin{table}[!hpb]
\begin{center}
\begin{tabular}{|c|| c | c | c | c | c | c | c |c| }
\hline
$\cdot$ & $1$ &$2$& $3$&$4$& $5$ & $6$& $7$& $8$\\
\hline\hline
1 & 1 & 2 & 3 & 4 & 5 & 6 & 7 & 8\\
\hline
2 & 2 & 1 & 4 & 3 & 6 & 5 & 8 & 7\\
\hline
3 & 3 & 4 & 1 & 2 & 7 & 8 & 6 & 5\\
\hline
4 & 4 & 3 & 2 & 1 & 8 & 7 & 5 & 6\\
\hline
5 & 6 & 5 & 8 & 7 & 2 & 1 & 4 & 3\\
\hline
6 & 5 & 6 & 7 & 8 & 4 & 2 & 3 & l\\
\hline
7 & 8 & 7 & 5 & 6 & 3 & 4 & 1 & 2\\
\hline
8 & 7 & 8 & 6 & 5 & 4 & 3 & 2 & 1\\
\hline
\end{tabular}
\end{center}\caption{Quasigroup $(Q,\cdot)$ of order $8$}\label{Tab3.1}
\end{table}

\begin{myexam}\label{Exam1}
Let Table \ref{Tab3.1} represents the Latin square of a finite quasigroup $(Q,\cdot ),\;Q =\{1, 2, 3, 4, 5, 6, 7, 8\}$ and let $A =\{\gamma_{1}, \gamma_{2}, \gamma_{3}\}$ be any set of parameters. Let $F: A \longrightarrow 2^{Q}\backslash\{\emptyset\}$ be defined by
\begin{displaymath}
F(\gamma_{1}) = \{1, 2\},\; F(\gamma_{2}) = \{1, 2, 3, 4\},\;F(\gamma_{3}) = \{1, 2, 7, 8\}.
\end{displaymath}
Then, the pair $(F, A)$ is called a soft quasigroup over quasigroup $Q$ because each of $F(\gamma_i)\le Q,\; i = 1, 2,3.$\\
\end{myexam}

\begin{myth}\label{Thm33}
	
Let $(F, A)$ be a soft set over a group $(Q,\cdot)$. Then,  $(F, A)_{(Q,\cdot)}$ is a soft group if and only if any of the following statements is true:
	\begin{enumerate}
		\item $(F, A)_{(Q,\cdot)}$ is a soft loop.
		\item $(F, A)_{(Q,/)}$ is a soft groupoid.
		\item $(F, A)_{(Q,\backslash)}$ is a soft groupoid.
	\end{enumerate}
\end{myth}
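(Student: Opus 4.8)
The plan is to reduce the statement to Theorem~\ref{Thm3} applied pointwise over the parameter set $A$. The key observation is that, by Definition~\ref{softqg}, $(F,A)_{(Q,\cdot)}$ is a soft group precisely when $F(a)$ is a subgroup of $(Q,\cdot)$ for every $a\in A$; likewise $(F,A)_{(Q,\cdot)}$ is a soft loop iff each $F(a)$ is a subloop of $(Q,\cdot)$, and $(F,A)_{(Q,/)}$ (resp. $(F,A)_{(Q,\backslash)}$) is a soft groupoid iff $(F(a),/)$ (resp. $(F(a),\backslash)$) is a groupoid for every $a\in A$, where $/$ and $\backslash$ are the division operations of the group $(Q,\cdot)$. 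So each of the four assertions in the theorem is simply the ``for all $a\in A$'' quantification of one of the four equivalent conditions in Theorem~\ref{Thm3}, taken with $H = F(a)$ and the non-emptiness of $F(a)$ guaranteed by $F:A\to 2^Q\setminus\{\emptyset\}$.

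First I would fix an arbitrary $a\in A$ and set $H := F(a)$, noting $\emptyset \ne H \subseteq Q$. Then I would invoke Theorem~\ref{Thm3} for the group $(Q,\cdot)$ and this $H$, which gives the chain of equivalences: $(H,\cdot)$ is a subgroup $\iff$ $(H,\cdot)$ is a subloop $\iff$ $(H,/)$ is a groupoid $\iff$ $(H,\backslash)$ is a groupoid. Next I would ``lift'' each equivalence through the universal quantifier over $a\in A$: since the equivalence holds for every choice of $a$, the statement ``$F(a)$ is a subgroup for all $a$'' is equivalent to ``$F(a)$ is a subloop for all $a$'', and so on. Finally I would translate each universally quantified statement back into the language of soft structures using Definition~\ref{softqg} and the definitions of the parastrophic soft sets $(F,A)_{(Q,/)}$ and $(F,A)_{(Q,\backslash)}$, yielding exactly the equivalence of the soft-group condition with each of the three listed conditions.

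I do not anticipate a genuine obstacle here, since the argument is essentially a bookkeeping exercise: the mathematical content is entirely contained in Theorem~\ref{Thm3}, and the soft-set layer contributes only the routine pointwise-to-global passage. The one point requiring a little care is making explicit that the division operations $/$ and $\backslash$ appearing in the parastrophes $(F,A)_{(Q,/)}$ and $(F,A)_{(Q,\backslash)}$ are those inherited from the ambient group $(Q,\cdot)$ (so that Theorem~\ref{Thm3}'s hypotheses match), and that ``$(H,/)$ is a groupoid'' means $H$ is closed under $/$, i.e. $H/H \subseteq H$ --- which is exactly what ``$(F,A)_{(Q,/)}$ is a soft groupoid'' asserts at the parameter $a$. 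Once this identification is stated, the proof is a direct citation of Theorem~\ref{Thm3} under the quantifier.
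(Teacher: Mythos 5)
Your proposal is correct and follows exactly the paper's own argument: translate the soft conditions into pointwise conditions on each $F(a)$ via Definition~\ref{softqg}, apply the four-way equivalence of Theorem~\ref{Thm3} with $H=F(a)$, and pass the equivalences through the quantifier over $a\in A$. Your added remark that the operations $/$ and $\backslash$ must be the division operations inherited from $(Q,\cdot)$ is a sensible clarification the paper leaves implicit, but there is no substantive difference in approach.
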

\begin{proof}
Let $(F, A)$ be a soft set over a group $(Q,\cdot)$. $(F, A)_{(Q,\cdot)}$ is a soft group if and only if $F(a)$ is a subgroup of $Q$ for all $a\in A$. Going by Theorem \ref{Thm3}, this possible if and only any of the following statements is true:
\begin{enumerate}
	\item $F(a)$ is a subloop of $(Q, \cdot)$.
	\item $F(a)$ is a subgroupoid of $(Q,/)$.
	\item $F(a)$ is a subgroupoid of $(Q,\backslash)$.
\end{enumerate}
It can thus be concluded that $(F, A)_{(Q,\cdot)}$ is a soft group if and only if $(F, A)_{(Q,\cdot)}$ is a soft loop if and only if $(F, A)_{(Q,/)}$ is a soft groupoid if and only if $(F, A)_{(Q,\backslash)}$ is a soft groupoid. 
\end{proof}

\begin{mydef}(Soft Subquasigroup)\cite{oyem, tony}
	
Let $(F, A)$ and $(G, B)$ be two soft quasigroups over a quasigroup $Q$. Then,
\begin{enumerate}
\item $(F, A)$ is a soft subquasigroup of $(G, B)$, denoted $(F, A)\le (G, B)$, if
\begin{enumerate}
\item $A \subseteq B$, and
\item $F(a) \le G(a)$ for all $a\in A$.
\end{enumerate}
\item $(F, A)$ is soft equal to $(G, B)$, denoted $(F, A) = (G, B)$, whenever $(F,  A) \le (G, B)$  and $(G, B) \le (F, A)$.
\end{enumerate}
\end{mydef}
\begin{myexam}\label{Exam2}
Consider the Latin square Table \ref{Tab3.1} of a quasigroup $(Q,\cdot ),~Q=\{1, 2, 3, 4, 5, 6, 7, 8\}$ in Example~\ref{Exam1}. $E = \{\gamma_{1}, \gamma_{2}, \gamma_{3}, \gamma_{4}, \gamma_{5}, \gamma_{6}\}.$  Then, let $A$ = $\{\gamma_{1}, \gamma_{2}, \gamma_{3}\} \subset E$ and $B$ = $\{\gamma_{1}, \gamma_{2}, \gamma_{3},\gamma_4\} \subset E$ be two sets of parameters, where  $F(\gamma_{1}) = \{1\}, F(\gamma_{2}) = \{1, 2\}, F(\gamma_{3}) = \{1, 2, 7, 8\}$ and $G(\gamma_1) = \{1, 2\}$, $G(\gamma_2) = \{1, 2, 3, 4\}$, $G(\gamma_3) = Q$.
Then, $(F, A) \le (G, B)$;
since $A \subseteq B$ and $F(\gamma) \le G(\gamma)$, for all $\gamma \in A$. But $(G, B) \not\le (F, A)$. Hence $(F, A) \neq (G,B)$.

This example shows that two soft quasigroups can not be equal if they have different set parameters.
\end{myexam}

\subsection{Parastrophes of Soft Quasigroups}
\begin{mydef}(Parastrophes of Soft Quasigroup)

Let $(F, A)$ be a soft quasigroup over a quasigroup $(Q,\star)$. The five parastrophes of $(F, A)_{(Q,\star)}$ are the soft sets $(F, A)_{(Q,\star_i)}~i=1,2,3,4,5$.
\end{mydef}

\begin{mylem}\label{Lem1}
Let $(F, A)$ be a soft set over a quasigroup $(Q, \cdot, \backslash, /)$, then the following statements are equivalent.
\begin{multicols}{2}
\begin{enumerate}
\item $(F, A)_{(Q, \cdot)}$ is a soft quasigroup.
\item $(F, A)_{(Q, \circledast)}$ is a soft quasigroup.
\item $(F, A)_{(Q, \backslash)}$ is a soft quasigroup .
\item $(F, A)_{(Q, /)}$ is a soft quasigroup.
\item $(F, A)_{(Q, \backslash \backslash)}$ is a soft quasigroup
\item $(F, A)_{(Q, //)}$ is a soft quasigroup
\end{enumerate}
\end{multicols}
\end{mylem}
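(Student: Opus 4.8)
The plan is to reduce the statement, via the definition of a soft quasigroup, to a purely quasigroup-theoretic fact about subquasigroups and parastrophes, and then to invoke Theorem~\ref{Thm1}. First I would recall that, by Definition~\ref{softqg}, for any one of the six operations $\star' \in \{\cdot,\circledast,\backslash,/,\backslash\backslash,//\}$ the pair $(F,A)_{(Q,\star')}$ is a soft quasigroup precisely when $F(a)$ is a subquasigroup of $(Q,\star')$ for every $a \in A$. Consequently all six statements will be equivalent as soon as one shows that, for a fixed $a \in A$, the subset $F(a)$ is a subquasigroup of one of the six parastrophes of $(Q,\star)$ if and only if it is a subquasigroup of each of them.

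To establish that last claim I would use Theorem~\ref{Thm1} together with the fact that the six quasigroups $(Q,\cdot),(Q,\circledast),(Q,\backslash),(Q,/),(Q,\backslash\backslash),(Q,//)$ form a single family that is closed under parastrophy: a parastrophe of a parastrophe of $(Q,\cdot)$ is again one of these six. This is the $S_3$-action on the defining ternary relation $\{(x,y,z): x\star y = z\}$ by permutation of coordinates, and it is exactly why $\circledast$ is the opposite of the opposite of $\cdot$, why $(Q,\cdot)$ is itself a parastrophe of $(Q,\backslash)$, of $(Q,/)$, and so on. Concretely, Theorem~\ref{Thm1} applied with base quasigroup $(Q,\star)$ gives $(1)\Rightarrow(i)$ for each $i=2,\dots,6$; applying the same theorem with $(Q,\star_i)$ now playing the role of the base quasigroup — legitimate precisely because $(Q,\star)$ is one of the parastrophes of $(Q,\star_i)$ — gives the reverse implication $(i)\Rightarrow(1)$. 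Chaining these yields the equivalence of all six statements.

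An alternative, slightly more hands-on route is to prove the cycle of implications $(1)\Rightarrow(2)\Rightarrow(3)\Rightarrow(4)\Rightarrow(5)\Rightarrow(6)\Rightarrow(1)$ directly: for each consecutive pair one reads off from Table~\ref{2.1} that the second operation is a parastrophe of the first, so Theorem~\ref{Thm1} transports the subquasigroup property along each arrow, and six arrows close the loop. Either way the argument is soft — no Latin-square computation is required — because Theorem~\ref{Thm1} already does the work of showing that a subquasigroup of a quasigroup is a subquasigroup of each of its parastrophes.

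The one genuine subtlety, and the step I would be most careful to state explicitly, is the closure-under-parastrophy observation — that when we pass from $(Q,\star)$ to a parastrophe $(Q,\star_i)$ the original $(Q,\star)$ reappears among the parastrophes of $(Q,\star_i)$, so Theorem~\ref{Thm1} can be applied in the reverse direction. This is standard ($S_3$ acts on the six parastrophes and every element lies in a single orbit), but without it Theorem~\ref{Thm1} only yields the implications leading out of statement $(1)$ and not those leading back into it. Once this is recorded, the six equivalences follow at once.
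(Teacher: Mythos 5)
Your proposal is correct and follows essentially the same route as the paper: unwind the definition of a soft quasigroup so that everything reduces to whether each $F(a)$ is a subquasigroup of the various parastrophes of $(Q,\star)$, and then invoke Theorem~\ref{Thm1}. You are in fact slightly more careful than the paper, which asserts the full six-way equivalence ``based on Theorem~\ref{Thm1}'' without explicitly recording the closure-under-parastrophy observation that you rightly identify as the step needed to obtain the reverse implications.
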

\begin{proof}
Let $(F, A)_{(Q, \cdot)}$ be a soft quasigroup. $(Q, \cdot)$ is a quasigroup if and only if $(Q, \circledast)$ is a quasigroup if and only if $(Q, \backslash)$ is a quasigroup if and only if $(Q, /)$ is a quasigroup if and only if $(Q, //)$ is a quasigroup, if and only if   $(Q, \backslash \backslash)$ is a quasigroup. Thus, for any $F(a) \subset Q,~a\in A$ and based on Theorem~\ref{Thm1}, the following are equivalent: 
\begin{itemize}
\item $(F(a), \cdot)$ is a subquasigroup of quasigroup $(Q, \cdot)$ for $a\in A$.
\item $(F(a), \circledast)$ is a subquasigroup of quasigroup $(Q, \circledast)$ for $a\in A$.
\item $(F(a), \backslash)$ is a subquasigroup of quasigroup $(Q, \backslash)$ for $a\in A$.
\item $(F(a), \backslash\backslash)$ is a subquasigroup of quasigroup $(Q, \backslash\backslash)$ for $a\in A$.	
\item $(F(a), /)$ is a subquasigroup of quasigroup $(Q,/)$ for $a\in A$.
\item $(F(a), //)$ is a subquasigroup of quasigroup $(Q,//)$ for $a\in A$.
\end{itemize}
Therefore, $(F, A)_{(Q, \star_i)}$ is a soft quasigroup if and only if $(F, A)_{(Q, \star_j)}$ is a soft quasigroup for any $i,j=0,1,2,3,4,5$.
\end{proof}

\begin{myrem}
Based on Lemma~\ref{Lem1}, the soft quasigroup  $(F, A)_{(Q, \cdot)}$ and its five parastrophes have the same set of parameters but need not be equal.
\end{myrem}

\begin{myth}
Let $(Q, \cdot)$ be a quasigroup (loop). $(F, A)_{(Q, \cdot)}$ is a soft quasigroup (loop) if and only the following are true: 
\begin{multicols}{2}
\begin{enumerate}
\item $(F, A)_{(Q, \cdot)}$ is a soft groupoid;
\item $(F, A)_{(Q, \backslash)}$ is a soft groupoid;
\item $(F, A)_{(Q, /)}$ is a soft groupoid;
\end{enumerate}
\end{multicols}
\end{myth}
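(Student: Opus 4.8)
The plan is to strip the statement down to its parameter-wise content and then quote Theorem~\ref{Thm2}. By Definition~\ref{softqg}, $(F,A)_{(Q,\cdot)}$ is a soft quasigroup (soft loop) exactly when $F(a)$ is a subquasigroup (subloop) of $(Q,\cdot)$ for every $a\in A$; likewise $(F,A)_{(Q,\cdot)}$, $(F,A)_{(Q,\backslash)}$, $(F,A)_{(Q,/)}$ are soft groupoids exactly when $\bigl(F(a),\cdot\bigr)$, $\bigl(F(a),\backslash\bigr)$, $\bigl(F(a),/\bigr)$ are groupoids for every $a\in A$. Since $(Q,\cdot)$ is a quasigroup, $\backslash$ and $/$ are genuine binary operations on $Q$, so all of these assertions are indexed by the \emph{same} parameter set $A$. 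Hence it suffices to prove, for each fixed $a\in A$ and each fixed nonempty $H=F(a)\subseteq Q$, the biconditional
\[
H\le (Q,\cdot)\ \Longleftrightarrow\ (H,\cdot),\ (H,\backslash),\ (H,/)\ \text{are all groupoids},
\]
and then to conjoin over $a\in A$ to recover the soft statement, in both directions.

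For the quasigroup case this biconditional is precisely Theorem~\ref{Thm2}, so nothing further is required: if $(F,A)_{(Q,\cdot)}$ is a soft quasigroup, then each $F(a)$ is a subquasigroup, hence closed under $\cdot,\backslash,/$, giving the three soft-groupoid conditions; conversely, if all three soft-groupoid conditions hold, then for each $a$ the set $F(a)$ is closed under $\cdot,\backslash,/$, so by Theorem~\ref{Thm2} it is a subquasigroup, and $(F,A)_{(Q,\cdot)}$ is a soft quasigroup.

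For the loop case I would observe that Theorem~\ref{Thm2} already states the same equivalence with ``subloop'' in place of ``subquasigroup'' when $(Q,\cdot)$ is a loop; the only extra point is that a nonempty $H$ closed under all three operations automatically contains the identity $e$ of $(Q,\cdot)$, since for any $h\in H$ we have $h/h\in H$ and $h/h=e$ by definition of right division in a loop. Thus $H$ is a subquasigroup containing $e$, i.e.\ a subloop, and the argument closes as before. I do not anticipate a real obstacle here; the one thing worth flagging in the write-up is that the three soft-groupoid hypotheses cannot be collapsed to the single condition ``$(F,A)_{(Q,\cdot)}$ is a soft groupoid'': a subgroupoid of a (possibly infinite) quasigroup need not be a subquasigroup, and it is exactly closure under both parastrophic divisions $\backslash$ and $/$ that supplies the missing solvability of $a\cdot x=b$ and $y\cdot a=b$ inside $H$ — which is why Theorem~\ref{Thm2}, and hence this theorem, must retain all three conditions.
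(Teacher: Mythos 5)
Your proposal is correct and follows essentially the same route as the paper: reduce the soft statement to a parameter-wise statement about each $F(a)$ and invoke Theorem~\ref{Thm2} in both directions, for the quasigroup and loop cases alike. Your additional observations (why the identity lands in $F(a)$ in the loop case, and why the three groupoid conditions cannot be collapsed to one) are sound elaborations that the paper leaves implicit.
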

\begin{proof}
Given that $(F, A)_{(Q, \cdot)}$ is a soft quasigroup, then $(F(a), \cdot)\le (Q, \cdot)$ for all $a\in A$. By Theorem~\ref{Thm2}, this is possible if and only $(F(a), \cdot),(F(a), /)$ and $(F(a), \backslash)$ are subgroupoids of $(Q, \cdot),(Q, /)$ and $(Q, \backslash)$ respectively. This last statement is true if and only if $(F, A)_{(Q, \cdot)},(F, A)_{(Q, /)}$ and $(F, A)_{(Q, \backslash)}$ are soft groupoids. The proof for when $(F, A)_{(Q, \cdot)}$ is a soft loop is similar.	
\end{proof}

\begin{myexam}\label{Exam3}
Consider the Latin squares in Table~\ref{2.2} of a quasigroup $(Q,\cdot)$ and its parastrophes. 
Let $A =\{\gamma_{1}, \gamma_{2}, \gamma_{3}\}$ be any set of parameters. Let $F: A \longrightarrow 2^{Q}\backslash\{\emptyset\}$ be defined by
\begin{displaymath}
F(\gamma_{1}) = \{1\},\; F(\gamma_{2}) = \{1, 2\},\;F(\gamma_{3}) = \{1, 3,4\}.
\end{displaymath}
Then, by Lemma~\ref{Lem1}, the soft set $(F, A)_{(Q,\cdot)}$ and its parastrophes are soft quasigroups with subquasigroups $F(\gamma_i)\le Q,\; i = 1, 2, 3$ which are represented by the Latin squares in Table~\ref{Tab3.3}.\\
\begin{table}[!hpb]
\begin{center}
\begin{tabular}{|c||c|c|}
\hline
$\cdot $ & 1 \\
\hline\hline
1 & 1  \\
\hline
\end{tabular}  $\equiv \left(F(\gamma_1),\cdot\right)$ \qquad \qquad\begin{tabular}{|c||c|c|}
\hline
$\cdot $ & 1 & 2 \\
\hline\hline
1 & 1 & 2  \\
2 & 2 & 1 \\
\hline
\end{tabular} $\equiv \left(F(\gamma_2),\cdot\right)$ \qquad \qquad \begin{tabular}{|c||c|c|c|c|}
\hline
$\cdot $ & 1 & 3 & 4\\
\hline\hline
1 & 1 & 3 & 4\\
3 & 3 & 4 & 1 \\
4 & 4 & 1 & 3 \\
\hline
\end{tabular} $\equiv \left(F(\gamma_3),\cdot \right)$
\end{center}
\begin{center}
\begin{tabular}{|c||c|c|}
\hline
$\circledast $ & 1 \\
\hline\hline
1 & 1  \\
\hline
\end{tabular}  $\equiv \left(F(\gamma_1),\circledast\right)$ \qquad \qquad\begin{tabular}{|c||c|c|}
\hline
$\circledast $ & 1 & 2 \\
\hline\hline
1 & 1 & 2  \\
2 & 2 & 1 \\
\hline
\end{tabular} $\equiv \left(F(\gamma_2),\circledast\right)$ \qquad  \begin{tabular}{|c||c|c|c|c|}
\hline
$\circledast $ & 1 & 3 & 4\\
\hline\hline
1 & 1 & 3 & 4\\
3 & 3 & 4 & 1 \\
4 & 4 & 1 & 3 \\
\hline
\end{tabular} $\equiv \left(F(\gamma_3),\circledast\right)$
\end{center}
\begin{center}
\begin{tabular}{|c||c|c|}
\hline
$/$ & 1 \\
\hline\hline
1 & 1  \\
\hline
\end{tabular}  $\equiv \left(F(\gamma_1),/\right)$ \qquad \qquad\begin{tabular}{|c||c|c|}
\hline
$/$ & 1 & 2 \\
\hline\hline
1 & 1 & 2  \\
2 & 2 & 1 \\
\hline
\end{tabular} $\equiv \left(F(\gamma_2),/\right)$ \qquad \qquad \begin{tabular}{|c||c|c|c|c|}
\hline
$/ $ & 1 & 3 & 4\\
\hline\hline
1 & 1 & 4 & 3\\
3 & 3 & 1 & 4 \\
4 & 4 & 3 & 1 \\
\hline
\end{tabular} $\equiv \left(F(\gamma_3),/\right)$
\end{center}
\begin{center}
\begin{tabular}{|c||c|c|}
\hline
$\backslash $ & 1 \\
\hline\hline
1 & 1  \\
\hline
\end{tabular}  $\equiv \left(F(\gamma_1),\backslash\right)$ \qquad \qquad\begin{tabular}{|c||c|c|}
\hline
$\backslash $ & 1 & 2 \\
\hline\hline
1 & 1 & 2  \\
2 & 2 & 1 \\
\hline
\end{tabular} $\equiv \left(F(\gamma_2),\backslash\right)$ \qquad \qquad \begin{tabular}{|c||c|c|c|c|}
\hline
$\backslash $ & 1 & 3 & 4\\
\hline\hline
1 & 1 & 3 & 4\\
3 & 4 & 1 & 3 \\
4 & 3 & 4 & 1 \\
\hline
\end{tabular} $\equiv \left(F(\gamma_3),\backslash\right)$
\end{center}
\begin{center}
\begin{tabular}{|c||c|c|}
\hline
$// $ & 1 \\
\hline\hline
1 & 1  \\
\hline
\end{tabular}  $\equiv \left(F(\gamma_1),//\right)$ \qquad \begin{tabular}{|c||c|c|}
\hline
$//$ & 1 & 2 \\
\hline\hline
1 & 1 & 2  \\
2 & 2 & 1 \\
\hline
\end{tabular} $\equiv \left(F(\gamma_2),//\right)$ \qquad  \begin{tabular}{|c||c|c|c|c|}
\hline
$// $ & 1 & 3 & 4\\
\hline\hline
1 & 1 & 3 & 4\\
3 & 3 & 1 & 4 \\
4 & 4 & 3 & 1 \\
\hline
\end{tabular} $\equiv \left(F(\gamma_3),//\right)$
\end{center}
\begin{center}
\begin{tabular}{|c||c|c|}
\hline
$\backslash \backslash $ & 1 \\
\hline\hline
1 & 1  \\
\hline
\end{tabular}  $\equiv \left(F(\gamma_1),\backslash \backslash\right)$ \qquad \begin{tabular}{|c||c|c|}
\hline
$\backslash \backslash $ & 1 & 2 \\
\hline\hline
1 & 1 & 2  \\
2 & 2 & 1 \\
\hline
\end{tabular} $\equiv \left(F(\gamma_2),\backslash \backslash\right)$ \qquad \begin{tabular}{|c||c|c|c|c|}
\hline
$\backslash \backslash $ & 1 & 3 & 4\\
\hline\hline
1 & 1 & 3 & 4\\
3 & 4 & 1 & 3 \\
4 & 3 & 4 & 1 \\
\hline
\end{tabular} $\equiv \left(F(\gamma_3),\backslash \backslash\right)$
\end{center}\caption{Parastrophes of Soft quasigroup $(F, A)_{(Q,\cdot )}$}\label{Tab3.3}
\end{table}
\end{myexam}

\begin{mydef}(Distributive Soft Quasigroup)

A soft quasigroup $(F, A)_Q$ is called a distributive soft quasigroup if $Q$ is a distributive quasigroup.
\end{mydef}
\begin{myth}
Let $(F, A)_{(Q,\cdot)}$ be a distributive soft quasigroup. Then
\begin{enumerate}
\item  its five parastrophes $(F, A)_{(Q, \circledast)},(F, A)_{(Q, \backslash)},(F, A)_{(Q, /)},(F, A)_{(Q, \backslash \backslash)},(F, A)_{(Q, //)}$ are distributive soft quasigroups.
\item $(F, A)_{(Q,\cdot)},(F, A)_{(Q, \circledast)},(F, A)_{(Q, \backslash)},(F, A)_{(Q, /)},(F, A)_{(Q, \backslash \backslash)},(F, A)_{(Q, //)}$ are idempotent soft quasigroups.
\item $(F, A)_{(Q,\cdot)},(F, A)_{(Q, \circledast)},(F, A)_{(Q, \backslash)},(F, A)_{(Q, /)},(F, A)_{(Q, \backslash \backslash)},(F, A)_{(Q, //)}$ are flexible soft quasigroups.
\end{enumerate}
\end{myth}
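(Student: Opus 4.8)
The plan is to reduce everything to the quasigroup-level facts already quoted from Pflugfelder, namely Theorems~\ref{Thm4} and \ref{Thm5}, together with Lemma~\ref{Lem1}. The hypothesis ``$(F,A)_{(Q,\cdot)}$ is a distributive soft quasigroup'' unpacks, by definition, to: $(F,A)_{(Q,\cdot)}$ is a soft quasigroup and $(Q,\cdot)$ is a distributive quasigroup. Everything will follow by pushing the distributivity of $Q$ through the parastrophe operations and then observing that the element-wise identities involved (idempotence, flexibility) are inherited by every subquasigroup $F(a)$, $a\in A$.

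First I would handle (1). Since $(Q,\cdot,/,\backslash)$ is a distributive quasigroup, Theorem~\ref{Thm5} gives that each of $(Q,\circledast),(Q,/),(Q,\backslash),(Q,//),(Q,\backslash\backslash)$ is again a distributive quasigroup. On the other hand, $(F,A)_{(Q,\cdot)}$ being a soft quasigroup, Lemma~\ref{Lem1} tells us that each parastrophe $(F,A)_{(Q,\star_i)}$, $i=1,\dots,5$, is a soft quasigroup. Combining the two: each $(F,A)_{(Q,\star_i)}$ is a soft quasigroup whose underlying quasigroup $(Q,\star_i)$ is distributive, hence is a distributive soft quasigroup by definition. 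This settles item~(1).

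Next, items (2) and (3) are obtained the same way for all six operations at once. By item~(1) (and the hypothesis for $(Q,\cdot)$ itself), $(Q,\star_i)$ is a distributive quasigroup for each $i=0,1,\dots,5$ (writing $\star_0=\cdot$). Theorem~\ref{Thm4}(1) then says $(Q,\star_i)$ is idempotent, i.e.\ $x\star_i x=x$ for all $x\in Q$, and Theorem~\ref{Thm4}(3) says $(Q,\star_i)$ is flexible, i.e.\ $x\star_i(y\star_i x)=(x\star_i y)\star_i x$ for all $x,y\in Q$. Both of these are identities that hold for every element of $Q$, so in particular they hold for every element of each subquasigroup $F(a)$, $a\in A$ (closure of $F(a)$ under $\star_i$ guarantees the relevant products stay inside $F(a)$, so the identity is not merely formally but genuinely satisfied in $(F(a),\star_i)$). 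Hence each $(F(a),\star_i)$ is an idempotent, flexible subquasigroup, which is exactly the statement that $(F,A)_{(Q,\star_i)}$ is an idempotent soft quasigroup and a flexible soft quasigroup. This gives items~(2) and (3).

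The only genuinely delicate point, rather than an obstacle, is bookkeeping: one must be consistent about what ``idempotent soft quasigroup'' and ``flexible soft quasigroup'' mean. Under either reasonable reading---that the underlying quasigroup has the property, or that every $F(a)$ has it---the two readings coincide here precisely because idempotence and flexibility are universally quantified identities and therefore descend to subquasigroups; so no case analysis on the definition is needed. Everything else is a direct citation of Theorems~\ref{Thm4} and \ref{Thm5} and of Lemma~\ref{Lem1}.
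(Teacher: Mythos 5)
Your proposal is correct and follows essentially the same route as the paper's own proof: Lemma~\ref{Lem1} for the soft-quasigroup property of the parastrophes, Theorem~\ref{Thm5} for distributivity of the parastrophic quasigroups, and Theorem~\ref{Thm4}(1,3) for idempotence and flexibility. Your extra remark that idempotence and flexibility, being universally quantified identities, descend to every subquasigroup $F(a)$ is a useful clarification the paper leaves implicit, but it does not change the argument.
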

\begin{proof}
Let $(F, A)_{(Q,\cdot)}$ be a soft quasigroup. Then, by Lemma~\ref{Lem1}, $(F, A)_{(Q, \circledast)},(F, A)_{(Q, \backslash)},(F, A)_{(Q, /)},(F, A)_{(Q, \backslash \backslash)},(F, A)_{(Q, //)}$ are soft quasigroups.

$(F, A)_{(Q,\cdot)}$ is a distributive soft quasigroup means that $(Q,\cdot)$ is a distributive quasigroup. Thus, by Theorem~\ref{Thm5}, $(Q,\circledast),(Q,/),(Q,\backslash),(Q,//),(Q,\backslash\backslash)$ are distributive quasigroups. Consequently, $(F, A)_{(Q, \circledast)},(F, A)_{(Q, \backslash)},(F, A)_{(Q, /)},(F, A)_{(Q, \backslash \backslash)},(F, A)_{(Q, //)}$ are distributive soft quasigroups.	

Following Theorem~\ref{Thm4}(1,3), $(F, A)_{(Q,\cdot)},(F, A)_{(Q, \circledast)},(F, A)_{(Q, \backslash)},(F, A)_{(Q, /)},(F, A)_{(Q, \backslash \backslash)},$
$(F, A)_{(Q, //)}$ are flexible soft quasigroups and idempotent soft quasigroups
\end{proof}

\begin{mylem}
Let $(F, A)_Q$ be a distributive soft quasigroup, such that $|Q| > 1$, then $(F, A)$ is neither a left nor right nuclear.
\end{mylem}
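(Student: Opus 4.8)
The plan is to obtain this as an immediate consequence of Theorem~\ref{Thm6}. Recall that a soft quasigroup $(F,A)_Q$ is \emph{left nuclear} (resp.\ \emph{right nuclear}) precisely when $F(a)$ is contained in the left nucleus $N_\lambda(Q)$ (resp.\ the right nucleus $N_\rho(Q)$) for every parameter $a\in A$. So the lemma will follow once we know that both nuclei of $Q$ are empty, and then observe that the values $F(a)$ can never be empty.

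First I would unwind the hypotheses: $(F,A)_Q$ being a distributive soft quasigroup means, by definition, that $Q$ is a distributive quasigroup, and we are additionally given $|Q|>1$. These are exactly the hypotheses of Theorem~\ref{Thm6}, so we conclude $N_\lambda(Q)=\emptyset=N_\rho(Q)$. Next I would invoke the definition of a soft set, in which $F$ maps $A$ into $2^{Q}\setminus\{\emptyset\}$: thus each $F(a)$ is a \emph{nonempty} subset of $Q$ (indeed a subquasigroup, since $(F,A)_Q$ is a soft quasigroup). A nonempty set cannot be contained in $\emptyset$, so $F(a)\not\subseteq N_\lambda(Q)$ and $F(a)\not\subseteq N_\rho(Q)$ for every $a\in A$; since $A\neq\emptyset$, the left- and right-nuclear conditions both fail, and hence $(F,A)$ is neither left nor right nuclear.

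There is essentially no obstacle here: the statement is a direct corollary of Theorem~\ref{Thm6} combined with the nonemptiness that is built into the very definition of a soft set. The only point worth stating explicitly is the trivial set-theoretic fact that a nonempty set is never a subset of the empty set, which is what forces \emph{every} parameter (not merely some) to violate the nuclear condition.
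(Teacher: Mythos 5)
Your argument is correct and follows essentially the same route as the paper: invoke Theorem~\ref{Thm6} to get $N_\lambda(Q)=\emptyset=N_\rho(Q)$ from distributivity and $|Q|>1$, then use the nonemptiness of each $F(a)$ (built into the definition of a soft set) to conclude the nuclear condition must fail. The only cosmetic difference is that the paper phrases ``nuclear'' via $F(a)=N_\lambda(Q)$ rather than $F(a)\subseteq N_\lambda(Q)$; since the paper never formally defines the term, either reading is defensible and both yield the same one-line conclusion.
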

\begin{proof}
If $(F, A)_Q$ is a distributive soft quasigroup, then $Q$ is a distributive quasigroup. Going by Theorem~\ref{Thm6}, since $|Q| > 1$, then, $N_{\rho}(Q)=\emptyset =N_{\lambda}(Q)$. So, there does not exist $a\in A$ such that $F(a)=N_{\rho}(Q)$ or $F(a)=N_{\lambda}(Q)$. Therefore, $(F, A)$ is neither a left nor right nuclear.
\end{proof}

\begin{mydef}
Let $(F, A)_Q$ be a soft quasigroup. For a fixed $x\in Q$, let $F_x,{}_xF: A \longrightarrow 2^{Q}\backslash{\emptyset}$ such that $F_x(a) = F(a)\cdot x$ and ${}_xF(a) = x\cdot F(a)$ for all $a\in A$.
\begin{enumerate}
\item The soft set $(F_x, A)$ will be called a $x$-right coset soft set of $(F, A)$ and at times denoted by $\left(F_x^\rho, A\right)$. The family $\left\{\left(F_x^\rho, A\right)_Q\right\}_{x\in Q}$ of soft sets will be represented by $\left(F_Q^\rho, A\right)$ and called the right coset of $(F, A)_Q$.
\item The soft set $\left({}_xF, A\right)$ will be called a $x$-left coset soft set of $(F, A)$ and at times denoted by $\left({}_xF^\lambda, A\right)$. The family $\left\{\left({}_xF^\lambda, A\right)_Q\right\}_{x\in Q}$ of soft sets will be represented by $\left(F_Q^\lambda, A\right)$ and called the left coset of $(F, A)_Q$.
\end{enumerate}
\end{mydef}

\begin{mylem}\label{Lem2}
Let $(F, A)_Q$ be a distributive soft quasigroup. Then, $\left(F_Q^\rho, A\right)=\left\{\left(F_x^\rho, A\right)_Q\right\}_{x\in Q}$ and $\left(F_Q^\lambda, A\right)=\left\{\left({}_xF^\lambda, A\right)_Q\right\}_{x\in Q}$  are both families of distributive soft quasigroups.
\end{mylem}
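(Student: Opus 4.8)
The plan is to reduce the statement to Pflugfelder's coset theorem (Theorem~\ref{Thm7}), applied one parameter at a time. Since $(F,A)_Q$ is a distributive soft quasigroup, by definition $Q$ is a distributive quasigroup, and since $(F,A)_Q$ is in particular a soft quasigroup, $F(a)\le Q$ for every $a\in A$. So for each $a\in A$ we are in the hypothesis of Theorem~\ref{Thm7} with $H=F(a)$.

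Fix an arbitrary $x\in Q$. For each $a\in A$ we have $F_x^\rho(a)=F(a)\cdot x$. Because $F(a)\le Q$ and $Q$ is distributive, Theorem~\ref{Thm7} says the right coset $F(a)\cdot x$ is a subquasigroup of $Q$; in particular it is a nonempty subset of $Q$, so $F_x^\rho : A \to 2^Q\setminus\{\emptyset\}$ is a well-defined map all of whose values are subquasigroups of $Q$. Hence $(F_x^\rho,A)_Q$ is a soft quasigroup, and since its underlying quasigroup $Q$ is distributive, $(F_x^\rho,A)_Q$ is a distributive soft quasigroup. As $x\in Q$ was arbitrary, every member of the family $\left(F_Q^\rho, A\right)=\left\{\left(F_x^\rho, A\right)_Q\right\}_{x\in Q}$ is a distributive soft quasigroup, which is the assertion for the right coset. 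The left-coset case is entirely symmetric: for fixed $x\in Q$ and $a\in A$, ${}_xF^\lambda(a)=x\cdot F(a)$ is a subquasigroup of $Q$ by the left-coset half of Theorem~\ref{Thm7}, so $\left({}_xF^\lambda,A\right)_Q$ is a soft quasigroup over the distributive quasigroup $Q$, i.e.\ a distributive soft quasigroup, and the family $\left(F_Q^\lambda, A\right)$ consists of such.

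There is essentially no hard step here. The only points requiring a moment's care are that ``a family of distributive soft quasigroups'' must be read memberwise — each $\left(F_x^\rho, A\right)_Q$ (resp.\ each $\left({}_xF^\lambda, A\right)_Q$) is itself a distributive soft quasigroup — and that the maps $F_x^\rho$ and ${}_xF^\lambda$ genuinely take values in $2^Q\setminus\{\emptyset\}$; both are immediate once Theorem~\ref{Thm7} guarantees that cosets of subquasigroups of a distributive quasigroup are again subquasigroups. (One could also observe, via Theorem~\ref{Thm5}, that the same conclusion propagates to the parastrophic operations, but this is not needed for the statement as phrased.)
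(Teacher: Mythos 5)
Your proof is correct and follows essentially the same route as the paper's: both reduce the claim, parameter by parameter, to Pflugfelder's coset theorem (Theorem~\ref{Thm7}) to get $F(a)\cdot x\le Q$ and $x\cdot F(a)\le Q$, and then invoke the definition of a distributive soft quasigroup over the distributive quasigroup $Q$. Your write-up is in fact slightly more careful than the paper's (which contains a small typo, asserting that $(F,A)_{(Q,\cdot)}$ rather than $\left({}_xF^\lambda,A\right)_Q$ is the resulting distributive soft quasigroup), but there is no substantive difference.
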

\begin{proof}
If $(F, A)_{(Q,\cdot )}$ is a distributive soft quasigroup, then $(Q,\cdot )$ is a distributive quasigroup and for all $a\in A,~F(a)\le Q$. Going by Theorem~\ref{Thm7}, $x\cdot F(a)\le Q$ for any fixed $x\in Q$. Thus, $(F, A)_{(Q,\cdot )}$ is a distributive soft quasigroup for any fixed $x\in Q$ and consequently, $\left(F_Q^\lambda, A\right)=\left\{\left({}_xF^\lambda, A\right)_Q\right\}_{x\in Q}$ is a family of distributive soft quasigroups. A similar argument goes for $\left(F_Q^\rho, A\right)=\left\{\left(F_x^\rho, A\right)_Q\right\}_{x\in Q}$.
\end{proof}

\begin{mydef}
Let $(F, A)_Q$ and $(G, A)_Q$ be soft quasigroup over $Q$, then $(F, A)_Q \cong (G, A)_Q$ if $F(a) \cong G(a)$ for all $a \in A$. 
\end{mydef}
\begin{mydef}
Let $(F, A)_Q$ be a soft quasigroup over $Q$.  $(F, A)_Q$ is called a normal soft quasigroup if $F(a)\lhd Q$ for all $a\in A$.
\end{mydef}

\begin{myth}\label{Thm11}
Let $(F, A)_Q$ be a distributive soft quasigroup. Then:
\begin{enumerate}
\item $(F, A)_Q \cong \left({}_xF^\lambda, A\right)_Q \cong \left(F_x^\rho, A\right)_Q$ for any fixed $x\in Q$. Furthermore, $\left(F_Q^\rho, A\right)\cong \left(F_Q^\lambda, A\right)$.
\item If $(F, A)_Q$ is a normal soft quasigroup, then $\left(F_Q^\rho, A\right)=\left\{\left(F_x^\rho, A\right)_Q\right\}_{x\in Q}$ and $\left(F_Q^\lambda, A\right)=\left\{\left({}_xF^\lambda, A\right)_Q\right\}_{x\in Q}$  are isomorphic families of normal soft quasigroups.
\end{enumerate}
\end{myth}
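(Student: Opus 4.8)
The plan is to reduce both assertions to the pointwise (``level-set'') behaviour of the soft quasigroup and then invoke the classical coset results for distributive quasigroups, Theorems~\ref{Thm8} and~\ref{Thm9}, together with Lemma~\ref{Lem2}.

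\textbf{Part 1.} First I would fix $x\in Q$ and an arbitrary $a\in A$. Since $(F,A)_Q$ is a distributive soft quasigroup, $Q$ is a distributive quasigroup and $F(a)\le Q$. By Lemma~\ref{Lem2} the sets ${}_xF(a)=x\cdot F(a)$ and $F_x(a)=F(a)\cdot x$ are again subquasigroups of $Q$, and Theorem~\ref{Thm8} applied to $H=F(a)$ gives $F(a)\cong x\cdot F(a)={}_xF(a)$ and $F(a)\cong F(a)\cdot x=F_x(a)$; transitivity then yields ${}_xF(a)\cong F(a)\cong F_x(a)$ for every $a\in A$. Invoking the definition of isomorphism of soft quasigroups (isomorphism on each level set), this reads $(F,A)_Q\cong\left({}_xF^\lambda,A\right)_Q\cong\left(F_x^\rho,A\right)_Q$. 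For the ``furthermore'' clause I would regard $\left(F_Q^\rho,A\right)$ and $\left(F_Q^\lambda,A\right)$ as families indexed by $x\in Q$; the identity map on the index set $Q$ is the required 1-1 correspondence, and by the isomorphisms just obtained $\left(F_x^\rho,A\right)_Q\cong(F,A)_Q\cong\left({}_xF^\lambda,A\right)_Q$ for each $x$, so the two families are isomorphic term by term.

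\textbf{Part 2.} Now assume in addition that $(F,A)_Q$ is normal, i.e. $F(a)\lhd Q$ for all $a\in A$. For fixed $x\in Q$ and each $a\in A$, Theorem~\ref{Thm9} (with $H=F(a)$) gives $x\cdot F(a)\lhd Q$ and $F(a)\cdot x\lhd Q$; that is, ${}_xF(a)\lhd Q$ and $F_x(a)\lhd Q$. Hence, by the definition of a normal soft quasigroup, each $\left({}_xF^\lambda,A\right)_Q$ and each $\left(F_x^\rho,A\right)_Q$ is a normal soft quasigroup, so $\left(F_Q^\lambda,A\right)$ and $\left(F_Q^\rho,A\right)$ are families of normal soft quasigroups; and they are isomorphic families by the term-by-term isomorphism established in Part 1.

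\textbf{Expected obstacle.} There is no deep obstruction here: the statement is essentially a pointwise transcription of Theorems~\ref{Thm8} and~\ref{Thm9} and Lemma~\ref{Lem2}. The only point that needs care is the precise meaning of ``isomorphic families'': since the assignment $x\mapsto x\cdot F(a)$ (resp. $x\mapsto F(a)\cdot x$) need not be injective in $x$, the listed soft sets may repeat, so the isomorphism of families must be presented at the level of the index set $Q$ rather than by pretending the members are distinct. Once that indexing convention is pinned down, the remainder is routine bookkeeping through the soft-set definitions.
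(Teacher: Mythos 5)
Your proposal is correct and follows essentially the same route as the paper's own proof: reduce to the level sets $F(a)$, invoke Lemma~\ref{Lem2} for the coset soft sets being (distributive) soft quasigroups, Theorem~\ref{Thm8} for the pointwise isomorphisms $F(a)\cong x\cdot F(a)\cong F(a)\cdot x$, and Theorem~\ref{Thm9} for normality of the cosets. Your added remark about indexing the families by $x\in Q$ (since $x\mapsto x\cdot F(a)$ need not be injective) is a precision the paper leaves implicit, but it does not change the argument.
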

\begin{proof}
\begin{enumerate}
\item If $(F, A)_{(Q,\cdot)}$ is a distributive soft quasigroup, then, $(Q,\cdot)$ is a distributive quasigroup. 

Going by Lemma~\ref{Lem2}, $\left(F_Q^\rho, A\right)=\left\{\left(F_x^\rho, A\right)_Q\right\}_{x\in Q}$ and $\left(F_Q^\lambda, A\right)=\left\{\left({}_xF^\lambda, A\right)_Q\right\}_{x\in Q}$  are both families of distributive soft quasigroups. Using Theorem~\ref{Thm8}, $F(a)\cong F_x(a)$ and $F(a)\cong {}_xF(a) $ for all $a\in A$ and for any fixed $x\in Q$. So, $(F, A)_Q \cong \left({}_xF^\lambda, A\right)_Q \cong \left(F_x^\rho, A\right)_Q$ for any fixed $x\in Q$. More so, $\left(F_Q^\rho, A\right)\cong \left(F_Q^\lambda, A\right)$.
\item 	If $(F, A)_Q$ is a normal soft quasigroup, then, $F(a)\lhd Q$ for all $a\in A$. 

Using Theorem~\ref{Thm9}, $F_x(a)\lhd Q$ and ${}_xF(a)\lhd Q$ for all $a\in A$ and for any fixed $x\in Q$. So, $\left({}_xF^\lambda, A\right)_Q$ and  $\left(F_x^\rho, A\right)_Q$ are normal soft quasigroups for any fixed $x\in Q$.

Thus, $\left(F_Q^\rho, A\right)=\left\{\left(F_x^\rho, A\right)_Q\right\}_{x\in Q}$ and $\left(F_Q^\lambda, A\right)=\left\{\left({}_xF^\lambda, A\right)_Q\right\}_{x\in Q}$  are isomorphic families of normal soft quasigroups.
\end{enumerate}
\end{proof}

\begin{mydef}
Let $(F, A)_Q$ be a soft quasigroup, then the family $\left\{Q/F(a)\right\}_{a \in A}$ will be called the left quotient of $(F, A)_Q$ in $Q$ while $\left\{Q\backslash F(a)\right\}_{a \in A}$ will be called the right quotient of $(F, A)_Q$ in $Q$.
\end{mydef}
\begin{myth}
Let $(F, A)_Q$ be a normal and distributive soft quasigroup. Then:
\begin{enumerate}
\item the left quotient of $(F, A)_Q$ in $Q$ is a family of commutative distributive quasigroups and has a 1-1 correspondence with $\left(F_Q^\lambda, A\right)$.
\item the right quotient of $(F, A)_Q$ in $Q$ is a family of commutative distributive quasigroups and has a 1-1 correspondence with $\left(F_Q^\rho, A\right)$.
\end{enumerate}
\end{myth}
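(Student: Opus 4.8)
The plan is to derive both parts from Pflugfelder's Theorem~\ref{Thm10}, which already supplies commutativity and distributivity of the quotients once we know the relevant subquasigroups are normal and the ambient quasigroup distributive, and then to match the underlying set of each quotient with the values of the coset soft sets introduced before Lemma~\ref{Lem2}. Since $(F, A)_Q$ is a normal and distributive soft quasigroup, $F(a)\lhd Q$ for every $a\in A$ and $(Q,\cdot)$ is a distributive quasigroup; for each $a\in A$ fix a normal equivalence relation $\theta_a$ on $(Q,\cdot)$ having $F(a)$ as one of its classes, so that $Q/F(a)=Q/\theta_a$ (and note that $\theta_a$, being a congruence of a quasigroup, is automatically compatible with $/$ and $\backslash$ as well).

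For Part~(1), fix $a\in A$. Theorem~\ref{Thm10}, applied to the distributive quasigroup $Q$ and the normal subquasigroup $F(a)\lhd Q$, shows that $Q/F(a)$ is a commutative distributive quasigroup; letting $a$ range over $A$, the left quotient $\{Q/F(a)\}_{a\in A}$ is a family of commutative distributive quasigroups. For the correspondence, note that (by Theorems~\ref{Thm7}--\ref{Thm9}) the underlying set of $Q/F(a)$ is the set of left cosets $\{x\cdot F(a):x\in Q\}$ of $F(a)$, while by definition the $x$-left coset soft set $\left({}_xF^\lambda,A\right)_Q$ has value ${}_xF(a)=x\cdot F(a)$ at the parameter $a$. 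Hence $x\mapsto x\cdot F(a)$ is a surjection of $Q$ onto $Q/F(a)$ whose value at $x$ is precisely ${}_xF^\lambda(a)$, and $\left({}_xF^\lambda,A\right)_Q=\left({}_yF^\lambda,A\right)_Q$ exactly when $x\cdot F(b)=y\cdot F(b)$ for all $b\in A$; this is the claimed $1$-$1$ correspondence between $\left(F_Q^\lambda, A\right)=\left\{\left({}_xF^\lambda, A\right)_Q\right\}_{x\in Q}$ and the left quotient.

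For Part~(2), argue symmetrically with right cosets. By Theorem~\ref{Thm9}, the right cosets $F(a)\cdot x=F_x^\rho(a)$ are normal subquasigroups of $Q$ and partition $Q$; the associated right quotient $Q\backslash F(a)$ is a commutative distributive quasigroup, either by applying Theorem~\ref{Thm10} to the parastrophe $(Q,\backslash)$ (distributive by Theorem~\ref{Thm5}), in which $F(a)$ is still normal because the normal congruence $\theta_a$ is compatible with $\backslash$, or by the symmetric version of Theorem~\ref{Thm10}. The map $x\mapsto F(a)\cdot x=F_x^\rho(a)$ then furnishes, exactly as above, the $1$-$1$ correspondence between $\left(F_Q^\rho, A\right)=\left\{\left(F_x^\rho, A\right)_Q\right\}_{x\in Q}$ and the right quotient $\{Q\backslash F(a)\}_{a\in A}$.

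The only real obstacle is pinning down the phrase \emph{$1$-$1$ correspondence}: the left quotient $\{Q/F(a)\}_{a\in A}$ is indexed by $A$ while $\left(F_Q^\lambda, A\right)$ is indexed by $Q$, so one must decide and state whether the correspondence is meant parameterwise --- between $Q/F(a)$ and $\{{}_xF(a):x\in Q\}$ for each fixed $a$, which is literally an equality of sets once normality of $F(a)$ is used --- or globally, as the injection $\left({}_xF^\lambda,A\right)_Q\mapsto \bigl(x\cdot F(a)\bigr)_{a\in A}$ of $\left(F_Q^\lambda, A\right)$ into $\prod_{a\in A}Q/F(a)$. A secondary, minor point is to be sure that $Q\backslash F(a)$ is genuinely a quotient quasigroup, i.e.\ that normality of $F(a)$ survives passage to right cosets / to the parastrophe $(Q,\backslash)$; this is immediate because the witnessing normal congruence is compatible with all of $\cdot$, $/$ and $\backslash$. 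Everything else is a direct application of the quoted results of Pflugfelder.
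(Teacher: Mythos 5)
Your proof of the substantive assertion --- that each $Q/F(a)$ (resp.\ $Q\backslash F(a)$) is a commutative distributive quasigroup --- is exactly the paper's argument: apply Pflugfelder's Theorem~\ref{Thm10} to the normal subquasigroup $F(a)\lhd Q$ of the distributive quasigroup $Q$ for each parameter $a\in A$, and then let $a$ range over $A$ (the paper handles the right-handed case with the words ``similar argument,'' much as you do). Where you differ is that you go further than the source: the paper's own proof is entirely silent on the claimed $1$-$1$ correspondence with $\left(F_Q^\lambda, A\right)$ and $\left(F_Q^\rho, A\right)$, whereas you supply the natural candidate map $x\mapsto x\cdot F(a)={}_xF^\lambda(a)$ identifying the underlying set of $Q/F(a)$ with the values of the left-coset soft sets, and you correctly flag the real issue --- that the quotient family is indexed by $A$ while the coset family is indexed by $Q$, so the intended sense of ``$1$-$1$ correspondence'' must be fixed (parameterwise equality of sets of cosets, versus a global injection) before the claim is even well posed. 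That observation is not a defect in your argument but a genuine gap in the theorem's formulation that the paper's proof simply does not address; your treatment is therefore both consistent with the paper's route and strictly more complete.
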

\begin{proof}
\begin{enumerate}
\item If $(F, A)_Q$ is a normal and distributive soft quasigroup, then, $F(a)\lhd Q$ for all $a\in A$. Going by Theorem \ref{Thm10}, $Q/F(a)$ is a commutative distributive quasigroup for each $a\in A$. Thus, the left quotient of $(F, A)_Q$ i.e. $\left\{Q/F(a)\right\}_{a \in A}$  is a family of commutative distributive quasigroups.
\item Similar argument.
\end{enumerate}
\end{proof}

\begin{mydef}(Order of Soft Quasigroup)(\cite{oyem})
	
	Let $(F, A)$ be a soft quasigroup over a finite quasigroup $(Q,\cdot )$. The order of the soft quasigroup $(F, A)$ or $(F, A)_{(Q,\cdot )}$ will be defined as
	\begin{displaymath}
		|(F, A)|_{(Q,\cdot )} = |(F, A)| = \sum\limits_{a \in A}|F(a)|, ~~for~~ F(a) \in (F, A)~~ and ~~a \in A.
	\end{displaymath}
	where the sum is over distinct proper subquasigroups $F(a) \in (F, A),~a \in A$.
\end{mydef}
\begin{mydef}(Arithmetic and Geometric Means of Finite Soft Quasigroup)(\cite{oyem})
	
	Let $(F, A)$ be a soft quasigroup over a finite quasigroup  $(Q,\cdot )$. The arithmetic mean and geometric mean of $(F, A)_{(Q,\cdot )}$ will be defined respectively as
	\begin{displaymath}
		\displaystyle \mathcal{AM}(F, A)_{(Q,\cdot )} = \frac{1}{|A|}\sum\limits_{a \in A}|F(a)|\quad\textrm{and}\qquad \mathcal{GM}(F, A)_{(Q,\cdot )}=\sqrt[|A|]{\prod\limits_{a \in A}|F(a)|}
	\end{displaymath}
\end{mydef}
\begin{myth}\label{oyemoyem}
Let $(F, A)$ be a soft quasigroup over a quasigroup $(Q,\star)$. Then 
\begin{enumerate}
	\item $|(F, A)|_{(Q,\star)}=|(F, A)|_{(Q,\star_i)}$ for each $~i=1,2,3,4,5$.
	\item $\mathcal{AM}(F, A)_{(Q,\star )} =\mathcal{AM}(F, A)_{(Q,\star_i)}$ for each $~i=1,2,3,4,5$.
	\item $\mathcal{GM}(F, A)_{(Q,\star )} =\mathcal{GM}(F, A)_{(Q,\star_i)}$ for each $~i=1,2,3,4,5$.
\end{enumerate}
\end{myth}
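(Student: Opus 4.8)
The plan is to exploit the fact that passing from $\star$ to one of its parastrophes $\star_i$ alters the \emph{binary operation} on $Q$ but not the \emph{underlying sets} of the images $F(a)$. Consequently every quantity that occurs in the three displayed identities is computed from exactly the same numerical data, namely the cardinalities $\{|F(a)|\}_{a\in A}$ and $|A|$, and the identities become tautologies once this is made precise. (Here, of course, $Q$ is taken finite so that the order, the arithmetic mean and the geometric mean are defined.)

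First I would invoke Lemma~\ref{Lem1}: since $(F,A)_{(Q,\star)}$ is a soft quasigroup, each of its parastrophes $(F,A)_{(Q,\star_i)}$ is again a soft quasigroup over the finite quasigroup $(Q,\star_i)$, carrying the very same parameter set $A$. Then, for each $a\in A$, Theorem~\ref{Thm1} shows that the subset $F(a)\subseteq Q$, being a subquasigroup of $(Q,\star)$, is also a subquasigroup of $(Q,\star_i)$; the point is that it is the \emph{same} subset of $Q$, so $|F(a)|$ is independent of which of the six operations we equip $Q$ with. Write $n_a:=|F(a)|$ for this common value.

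Next I would verify the bookkeeping in the definition of $|(F,A)|$, where the sum runs over the \emph{distinct proper} subquasigroups among the $F(a)$, $a\in A$. Whether $F(a)=F(b)$ and whether $F(a)=Q$ are purely set-theoretic conditions, insensitive to the choice of operation; hence the index set over which that sum is taken is literally the same for $(Q,\star)$ and for $(Q,\star_i)$, and each summand $n_a$ is unchanged. This gives $|(F,A)|_{(Q,\star)}=|(F,A)|_{(Q,\star_i)}$, which is item~(1). For items~(2) and~(3) the sum and product run over all of $A$; since $|A|$ is fixed and each $|F(a)|=n_a$ agrees across all parastrophes, the quantities $\frac{1}{|A|}\sum_{a\in A}n_a$ and $\sqrt[|A|]{\prod_{a\in A}n_a}$ are manifestly the same whether evaluated with $\star$ or with $\star_i$.

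The hard part is essentially nil: the only thing requiring care is the ``distinct proper'' clause in the definition of the order, and that clause is visibly parastrophe-invariant because distinctness and properness refer only to carrier sets. The substantive input is Theorem~\ref{Thm1}, which guarantees that parastrophy fixes the carrier set of every subquasigroup; given that, any numerical invariant built solely from $\{|F(a)|\}_{a\in A}$ and $|A|$ is automatically preserved, and the theorem follows.
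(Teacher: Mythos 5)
Your proposal is correct and follows essentially the same route as the paper: invoke Lemma~\ref{Lem1} (with Theorem~\ref{Thm1}) to see that each parastrophe is a soft quasigroup with the same parameter set and the same carrier sets $F(a)$, so every quantity built from the cardinalities $|F(a)|$ and $|A|$ is unchanged. Your extra remark that the ``distinct proper'' clause in the definition of the order is parastrophe-invariant is a careful touch the paper glosses over, but it does not change the argument.
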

\begin{proof}
Let $(F, A)_{(Q,\star)}$ be a soft quasigroup, then by Lemma \ref{Lem1}, $(F, A)_{(Q,\star_i)}$ is a soft quasigroup for each $~i=1,2,3,4,5$. Thus,
\begin{enumerate}
	\item $|(F, A)|_{(Q,\star)}=\sum\limits_{a \in A}|F(a)|=|(F, A)|_{(Q,\star_i)}$ for each $~i=1,2,3,4,5$.
	\item $\mathcal{AM}(F, A)_{(Q,\star )} =\frac{1}{|A|}\sum\limits_{a \in A}|F(a)|=\mathcal{AM}(F, A)_{(Q,\star_i)}$ for each $~i=1,2,3,4,5$.
	\item $\mathcal{GM}(F, A)_{(Q,\star )} =\sqrt[|A|]{\prod\limits_{a \in A}|F(a)|}=\mathcal{GM}(F, A)_{(Q,\star_i)}$ for each $~i=1,2,3,4,5$.
\end{enumerate}
Thus, for any finite quasigroup, the orders (arithmetic and geometric means) of the soft quasigroup over it and its parastrophes are equal.
\end{proof}

\begin{myrem}
Based on Theorem \ref{oyemoyem}, the Maclaurin’s inequality and several other inequalities for a finite soft quasigroup obtained in \cite{oyem} are true in all the five parastrophes of any finite soft quasigroup.
\end{myrem}

\section{Conclusion and Further Studies}
\paragraph{}
We considered soft set over non-associative algebraic structures groupoid, quasigroup and loop, which was motivated by the study of algebraic structures in the context soft sets, neutrosophic sets and rough sets. In this paper, we introduced and studied parastrophes of soft quasigroup, soft nuclei, and distributive soft quasigroups. It was cogently shown that any soft quasigroup belongs to a particular family of six soft quasigroups (which are not necessarily soft equal). A 1-1 correspondence  was found between the left (right) quotient of a soft quasigroup and the left (right) coset of the soft quasigroup. Some examples were given for illustration. On the basis of these results, soft quasigroup theory can be explored and their varieties studied. Oyem et al. \cite{Oyem10} just studied soft neutrosophic quasigroups.

\end{document}